\newtheorem{thm}{Theorem}[section]
\newtheorem{lem}[thm]{Lemma}
\newtheorem{prop}[thm]{Proposition}
\newtheorem{prob}[thm]{Problem}
\newtheorem{ques}[thm]{Question}
\theoremstyle{definition}
\newtheorem{defn}[thm]{Definition}
\theoremstyle{remark}
\newtheorem{rem}[thm]{Remark}
\newtheorem{ex}[thm]{Example}
\numberwithin{equation}{section}
\newcommand{\R}{\mathbb R}
\newcommand{\N}{\mathbb{N}}
\newcommand{\ds}[1]{\displaystyle{ #1}}
\newcommand{\seq}[1]{\left\{ #1 \right\}_{k=0}^{\infty}}
\newcommand{\ve}{\varepsilon}
\newcommand{\LP}{\mathcal{L}-\mathcal{P}}
\newcommand{\sq}{\{\gamma_n\}_{n=0}^{\infty} }
\newcommand{\mbb}{\mathbb}
\begin{document}
\title{Hermite multiplier sequences and their associated operators}
\author{Tam\'as Forg\'acs}
\author{Andrzej Piotrowski}
\thanks{Parts of the work were completed while the first author was on sabbatical leave at the University of Hawai\textquoteleft i at Manoa, whose support he gratefully acknowledges. The authors would also like to thank their respective institutions for providing financial support for their research.}
\begin{abstract} We provide an explicit formula for the coefficient polynomials of a Hermite diagonal differential operator. The analysis of the zeros of these coefficient polynomials yields the characterization of generalized Hermite multiplier sequences which arise as Taylor coefficients of real entire functions with finitely many zeros. We extend our result to functions in $\LP$ with infinitely many zeros, under additional hypotheses.\newline {\bf MSC 30C15, 26C10}  
\end{abstract}
\maketitle

\section{Introduction}
In their much celebrated work \cite{BB}, Borcea and Br\"and\'en characterized all linear operators which preserve  geometric locations of polynomials of a single variable, thereby completing a program whose origins date back to the turn of the 20th century and the seminal paper of G. P\'olya and J. Schur \cite{ps}. P\'olya and Schur studied sequences of real numbers $\seq{\gamma_k}$ with the property that the real polynomial $\sum_{k=0}^m \gamma_k a_k x^k$ has only real zeros whenever $\sum_{k=0}^m a_kx^k$ does. They named such sequences {\it multiplier sequences} of the first kind. The classification of these objects has two of its pillars (i) the deep understanding and careful analysis of what is now known as the Laguerre-P\'olya class of real entire functions, and (ii) the Schur-Mal\'o-Szeg\H{o} composition theorem (\cite[p.7]{CCsurvey}, \cite[p. 337-340]{levin}). A natural extension to the P\'olya-Schur theory is the study of $Q$-multiplier sequences; that is, real sequences $\seq{\gamma_k}$ with the property that $\sum_{k=0}^m \gamma_k a_k q_k(x)$ has only real zeros whenever $\sum_{k=0}^m a_kq_k(x)$ does, where $Q=\seq{q_k(x)}$ is an arbitrary (fixed) basis for $\R[x]$. In a 1950 lecture \cite{turan1}, Tur\'an already pointed out the potential usefulness of such an extension to the study of Hermite multiplier sequences and their relation to the study of the Riemann $\zeta$-function. Subsequently, Tur\'an \cite{turan2}, and later Bleecker and Csordas \cite{BC} paved the way to our understanding of Hermite expansions of real entire functions and Hermite multiplier sequences, whose complete characterization was obtained in 2007 by Piotrowski \cite{andrzej}. More recently, Br\"and\'en and Ottergren \cite{bo} gave the only other known characterization involving multiplier sequences, namely those for generalized Laguerre bases.   \\
\indent Applications of Borcea and Br\"and\'en's main theorems require one to compute the action of a given linear operator $T:\R[x] \to \R[x]$ on various basis elements. One can accomplish this by identifying the unique coefficient polynomials $Q_k(x)$ in the representation
\begin{equation} \label{diffop}
T=\sum_{k=0}^{\infty} Q_k(x)D^k \qquad \left(D=\frac{d}{dx} \right),
\end{equation}
whose existence is shown in \cite{peetre} and \cite[p.32]{andrzej}.  The following two problems thus naturally arise.
\begin{prob}\label{problem1}
Given a linear operator $T:\R[x] \to \R[x]$, find explicit expressions for the coefficient polynomials $Q_k(x)$.
\end{prob}
\begin{prob}\label{problem2} Identify properties of the polynomials $Q_k(x)$ which allow one to decide directly whether or not $T$ is reality preserving. 
\end{prob}
As far as the first problem is concerned, the literature provides an answer only for operators which are diagonal (cf. Definition \ref{diagonal}) with respect to the standard basis.
\begin{defn}\label{diagonal}
Let $\mathcal{B} = \{b_n(x)\}_{n=0}^{\infty}$ be a basis for the vector space $\mathbb{R}[x]$.  A linear operator $T:\mathbb{R}[x]\rightarrow\mathbb{R}[x]$ is diagonal with respect to the basis $\mathcal{B}$ if there is a sequence of real numbers $\{\gamma_n\}_{n=0}^{\infty}$ such that 
\begin{equation}\label{bms}
T[b_n(x)] = \gamma_n b_n(x) \qquad (n=0,1,2, \dots).
\end{equation}
We shall use the terminology ``$\mathcal{B}$-diagonal'' to describe operators with this property. The reader should note that there exist linear operators on $\R[x]$ which are not diagonal with respect to any basis $\mathcal{B}$.
\end{defn}
\begin{prop}\label{sb}\emph{(\cite{andrzej} Proposition 33)} If $\sq$ is a sequence of real numbers, then the linear operator $T:\mbb{R}[x]\rightarrow\mbb{R}[x]$ defined by $T[x^n] = \gamma_n x^n$, $n \in \N_0$, has the representation 
$$
T = \sum_{k=0}^{\infty} \frac{g_k^*(-1)}{k!} x^kD^k,
$$
where
\begin{equation}\label{gstar}
g_n^*(x) = \sum_{k=0}^{n}\binom{n}{k} \gamma_k x^{n-k} \qquad (n=0,1,2,\dots)
\end{equation}
are the reversed Jensen polynomials associated to the sequence $\seq{\gamma_k}$.
\end{prop}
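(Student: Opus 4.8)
The plan is to exploit the fact that both sides of the claimed identity are linear operators on $\R[x]$, so it suffices to verify that they agree on each basis monomial $x^n$. First I would record the elementary action of the building blocks $x^kD^k$ on monomials: since $D^k[x^n] = \frac{n!}{(n-k)!}x^{n-k}$ for $k \le n$ and vanishes for $k>n$, one has $x^kD^k[x^n] = \frac{n!}{(n-k)!}x^n$ for $0 \le k \le n$. Applying the right-hand operator to $x^n$ therefore collapses the infinite sum to a finite one:
\[
\sum_{k=0}^{\infty}\frac{g_k^*(-1)}{k!}x^kD^k[x^n] = \left(\sum_{k=0}^{n}\binom{n}{k}g_k^*(-1)\right)x^n.
\]
Thus the whole proposition reduces to establishing the single combinatorial identity $\gamma_n = \sum_{k=0}^{n}\binom{n}{k}g_k^*(-1)$ for every $n \in \N_0$.

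Next I would substitute the definition $g_k^*(-1) = \sum_{j=0}^{k}\binom{k}{j}\gamma_j(-1)^{k-j}$ from \eqref{gstar} and interchange the order of the two finite summations, grouping by the index $j$. Using the absorption identity $\binom{n}{k}\binom{k}{j} = \binom{n}{j}\binom{n-j}{k-j}$ and shifting the inner index via $m = k-j$, the sum over $k$ becomes
\[
\sum_{k=0}^{n}\binom{n}{k}g_k^*(-1) = \sum_{j=0}^{n}\gamma_j\binom{n}{j}\sum_{m=0}^{n-j}\binom{n-j}{m}(-1)^m = \sum_{j=0}^{n}\gamma_j\binom{n}{j}(1-1)^{n-j}.
\]
By the binomial theorem the factor $(1-1)^{n-j}$ vanishes whenever $j < n$ and equals $1$ exactly when $j=n$, so the double sum telescopes to the single surviving term $\gamma_n$, as desired.

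I expect the main obstacle to be purely organizational: correctly interchanging the two finite sums, invoking the subset-of-a-subset absorption identity for the binomial coefficients, and tracking the alternating signs so that the collapse to $(1-1)^{n-j}$ is transparent. No analytic difficulties arise, since on any fixed polynomial the operator acts through only finitely many of its terms; the one conceptual point worth noting is that this finiteness is precisely what makes the displayed series a well-defined operator on all of $\R[x]$.
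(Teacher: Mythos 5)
Your proof is correct. You verify directly that the operator $\sum_{k=0}^{\infty}\frac{g_k^*(-1)}{k!}x^kD^k$ sends $x^n$ to $\left(\sum_{k=0}^{n}\binom{n}{k}g_k^*(-1)\right)x^n$, and then establish the inversion identity $\gamma_n=\sum_{k=0}^{n}\binom{n}{k}g_k^*(-1)$ by interchanging sums, applying the absorption identity $\binom{n}{k}\binom{k}{j}=\binom{n}{j}\binom{n-j}{k-j}$, and collapsing via $(1-1)^{n-j}$; since both sides are linear and agree on every monomial, they agree on all of $\R[x]$. This is a genuinely different route from the one taken in the paper. Proposition \ref{sb} is quoted there without proof, and the paper's own derivation appears as Proposition \ref{hlimitstandard}: it first proves the much more involved Theorem \ref{Tkthm} giving the coefficient polynomials $Q_k^{\alpha}(x)$ of an $\mathcal{H}^{(\alpha)}$-diagonal operator, and then lets $\alpha\to 0^+$, using the limit relation (\ref{Hlim}) so that only the $j=0$ term of $Q_k^{\alpha}(x)$ survives and $\mathcal{H}_k^{(\alpha)}(x)\to x^k$. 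What your approach buys is a short, self-contained, and fully elementary argument that in passing re-proves Lemma \ref{gslem} (which the paper simply cites from Riordan) rather than assuming it; it also avoids the interchange of limits implicit in the paper's passage from $T_{\alpha}$ to $T$. What the paper's approach buys is conceptual: it exhibits the standard-basis result as a degenerate ($\alpha\to 0^+$) member of the family of Hermite diagonal operators, which is the unifying point of that section. Your closing observation that the series defines a bona fide operator because only finitely many terms act on any fixed polynomial is exactly the right remark to make the statement well posed.
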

The Jensen polynomials associated to a real entire function $\varphi$ (or to its sequence of Taylor coefficients) have been studied extensively (\cite{CCgausslucas}, \cite{CCJensen},\cite{csvv}, \cite{db}), and can be used to give conditions on when $\varphi$ belongs to the Laguerre-P\'olya class. It is thus encouraging that the reversed Jensen polynomials should appear in the coefficient polynomials of a differential operator which is diagonal with respect to the standard basis. \\
\indent In this paper we solve Problem \ref{problem1} for operators which are diagonal with respect to a generalized Hermite basis (Theorem \ref{Tkthm}). An appealing feature of our solution is the appearance of the reversed Jensen polynomials in the formulation of the polynomials $Q_k(x)$. In addition, the coefficients of an operator diagonal with respect to the standard basis are obtained as limits of the coefficients of Hermite diagonal operators (Proposition \ref{hlimitstandard}). We also solve Problem \ref{problem2} for certain Hermite diagonal operators by demonstrating that the reality of the zeros of the coefficient polynomials can be used to determine whether a Hermite diagonal operator is reality preserving. In particular, we show that if a Hermite diagonal operator is reality preserving, then its coefficient polynomials must all have only real zeros (Theorem \ref{tkreal}). The converse of this result is false in this much generality, but becomes true if we restrict our considerations to Hermite diagonal operators associated to functions in $\LP^+$ with finitely many zeros (Theorem \ref{finitelymanyzeros}). We also obtain a converse for sequences associated to functions in $\LP^+$ with infinitely many zeros, under additional hypotheses (Theorem \ref{infinitelymanyzeros}). The penultimate section is dedicated to demonstrating that in solving Problem \ref{problem2} for Laguerre diagonal operators, one will have to use a property other than the reality of the zeros of the coefficient polynomials. The paper concludes with a list of open problems.
\section{Preliminaries}
\subsection{The Laguerre-P\'olya Class}
We begin by reviewing some notions related to real entire functions and to their membership in various function classes.
\begin{defn} A real entire function $\displaystyle{\varphi(x)=\sum_{k=0}^{\infty} \frac{\gamma_k}{k!}x^k}$ is said to belong to the Laguerre-P\'olya class, written $\varphi \in \LP$, if it can be written in the form
\[
\varphi(x)=c x^m e^{-ax^2+bx} \prod_{k=1}^{\omega} \left(1+\frac{x}{x_k} \right) e^{-x/x_k},
\]
where $b,c \in \R$, $x_k \in \R \setminus \{ 0\}$, $m$ is a non-negative integer, $a\geq 0$, $0 \leq \omega \leq \infty$ and $\displaystyle{\sum_{k=1}^{\omega} \frac{1}{x_k^2} < +\infty}$.
\end{defn}
\begin{defn}\label{LPdef} A real entire function $\displaystyle{\varphi(x)=\sum_{k=0}^{\infty} \frac{\gamma_k}{k!}x^k}$ is said to be of type I in the Laguerre-P\'olya class, written $\varphi \in \LP I$, if $\varphi(x)$ or $\varphi(-x)$  can be written in the form
\[
\varphi(x)=c x^m e^{\sigma x} \prod_{k=1}^{\omega} \left(1+\frac{x}{x_k} \right),
\]
where $c \in \R$, $m$ is a non-negative integer, $\sigma \geq 0$, $x_k >0$, $0 \leq \omega \leq \infty$ and $\displaystyle{\sum_{k=1}^{\omega} \frac{1}{x_k} < +\infty}$. If $\gamma_k \geq 0$ for $k=0,1,2.\ldots$, we write $\varphi \in \LP^+$. 
\end{defn}

With this notation, the characterization of classical multiplier sequences alluded to in the introduction can be stated as follows.
\begin{thm}\label{PStrans} \cite{ps}
A sequence $\{\gamma_k\}_{k=0}^{\infty}$ of non-negative real numbers is a multiplier sequence if and only if  $\displaystyle{\sum_{k=0}^{\infty} \frac{\gamma_k}{k!}x^k \in \LP^+}$.
\end{thm}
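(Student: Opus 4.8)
The plan is to establish the two implications separately, with both directions resting on the two classical pillars cited in the introduction: the closure of $\LP$ under locally uniform limits (a consequence of Hurwitz's theorem) and the Schur-Mal\'o-Szeg\H{o} composition theorem. The bridge between the sequence $\seq{\gamma_k}$ and the function $\varphi(x)=\sum_{k=0}^{\infty}\frac{\gamma_k}{k!}x^k$ is furnished by the Jensen polynomials $g_n(x)=\sum_{k=0}^{n}\binom{n}{k}\gamma_k x^k$, which are exactly the images $T[(1+x)^n]$ under the diagonal operator $T[x^k]=\gamma_k x^k$ (the reversed versions $g_n^*$ being the ones that surface in Proposition \ref{sb}).

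For \emph{sufficiency}, assume $\varphi\in\LP^+$. I would first record that the Jensen polynomials $g_n$ inherit only real zeros from $\varphi$, and that these zeros are all nonpositive since the $\gamma_k$ are nonnegative. Given a real polynomial $p(x)=\sum_{k=0}^{n}a_k x^k$ with only real zeros, I write $a_k=\binom{n}{k}b_k$, so that both $p(x)=\sum_k\binom{n}{k}b_k x^k$ and $g_n(x)=\sum_k\binom{n}{k}\gamma_k x^k$ are in Jensen form; the composition theorem then guarantees that $\sum_k\binom{n}{k}\gamma_k b_k x^k=\sum_k\gamma_k a_k x^k=T[p](x)$ has only real zeros, precisely because the zeros of $g_n$ are of one sign. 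This settles the case $\deg p\le n$ for every $n$, hence all polynomials.

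For \emph{necessity}, assume $\seq{\gamma_k}$ is a multiplier sequence. Applying $T$ to $(1+x)^n$, whose only zero is $-1$, shows that $g_n(x)=T[(1+x)^n]$ has only real zeros, and since $\gamma_k\ge 0$ these zeros are nonpositive. The scaled polynomials $g_n(x/n)=\sum_{k=0}^{n}\frac{\gamma_k}{k!}\frac{n!}{(n-k)!\,n^k}x^k$ converge coefficientwise to $\varphi(x)$, and Newton's inequalities applied to $g_n$ force the log-concavity of $\seq{\gamma_k}$, bounding the growth of the coefficients enough to upgrade this to locally uniform convergence and to guarantee that $\varphi$ is entire. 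Each $g_n(x/n)$ has only real zeros, so Hurwitz's theorem places the limit $\varphi$ in $\LP$, and the nonnegativity of the $\gamma_k$ then pins it in $\LP^+$.

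The main obstacle is the convergence analysis in the necessity direction: one must extract from the reality of the zeros of the Jensen polynomials enough quantitative control (via Newton's inequalities) to know a priori that $\sum_k\gamma_k x^k/k!$ has infinite radius of convergence and that $g_n(x/n)\to\varphi$ uniformly on compact sets, rather than degenerating. A secondary technical point is to fix the precise hypotheses of the composition theorem, in particular the requirement that the zeros of one of the composed polynomials be of the same sign, which is exactly where the nonnegativity of $\seq{\gamma_k}$ and membership in $\LP^+$ (as opposed to merely $\LP$) enter.
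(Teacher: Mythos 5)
The paper does not actually prove this statement: it is the classical P\'olya--Schur theorem, quoted with a citation to \cite{ps}, so there is no in-paper argument to compare against. Your proposal reconstructs the standard proof (the one in \cite{ps}, also presented in the survey \cite{CCsurvey}), and its outline is correct: sufficiency via the Szeg\H{o} form of the composition theorem applied to $p$ and the Jensen polynomial $g_n$ with $n=\deg p$, necessity via $T[(1+x)^n]=g_n(x)$, the rescaling $g_n(x/n)\to\varphi(x)$, and Hurwitz together with the closure of $\LP$ under locally uniform limits of real-rooted polynomials. Two points deserve more care than you give them. First, the assertion that the Jensen polynomials of a function in $\LP^+$ have only real, nonpositive zeros is itself a nontrivial half of the P\'olya--Schur argument; it is established by approximating $\varphi$ locally uniformly by polynomials with only real nonpositive zeros, applying the composition theorem to each approximant together with $(1+x)^n$, and passing to the limit with Hurwitz. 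You should be explicit that you are invoking it as an independently proved fact rather than as a consequence of the multiplier-sequence property, which would be circular. Second, Newton's inequalities give $\gamma_k^2\geq\gamma_{k-1}\gamma_{k+1}$, and the resulting geometric bound $\gamma_k\leq\gamma_0(\gamma_1/\gamma_0)^k$ requires $\gamma_0>0$; you must first observe that log-concavity of a nonnegative sequence forbids internal zeros, split off any initial block of vanishing terms, and only then run the growth estimate that makes $\varphi$ entire and upgrades coefficientwise convergence of $g_n(x/n)$ to locally uniform convergence. Neither issue is fatal, and both are handled in the classical sources; with them filled in, your argument is the canonical proof of the theorem.
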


\subsection{Hermite Polynomials}
The following are some known facts about the classical and  generalized Hermite polynomials. For more details we refer the reader to \cite[Ch.11]{rainville} and \cite[Ch.2]{andrzej}, along with the references contained therein.
\begin{defn}
The classical Hermite polynomials are defined by 
$$
H_n(x) = (-1)^n e^{x^2} D^n e^{-x^2} \qquad (n=0,1,2,\dots),
$$
where $D$ denotes differentiation with respect to $x$.  The generalized Hermite polynomials with parameter $\alpha>0$ are defined by 
\[
\mathcal{H}_n^{(\alpha)}(x) = (-\alpha)^n \exp(x^2/2\alpha) D^n \exp(-x^2/2\alpha) \qquad (n=0,1,2,\dots).
\]
\end{defn}
The classical and generalized Hermite polynomials are related by the equation
\begin{equation}\label{clvsgen}
H_n\left(\frac{x}{\sqrt{2\alpha}}\right) = \left( \frac{2}{\alpha}\right)^{n/2} \mathcal{H}_n^{(\alpha)}(x).
\end{equation}
The generalized Hermite polynomials satisfy the limiting relation
\begin{equation}\label{Hlim}
\lim_{\alpha\to 0^+} \mathcal{H}^{(\alpha)}_n(x) = x^n \qquad (n=0,1,\ldots),
\end{equation}
the differentiation formula
\begin{equation}\label{derher}
%\frac{d}{dx} H_n(x) = 2 n H_{n-1}(x), \qquad 
\frac{d}{dx} \mathcal{H}_n^{(\alpha)}(x) = n  \mathcal{H}_{n-1}^{(\alpha)}(x) \qquad (\alpha>0;\, n=1,2,3,\dots),
\end{equation}
%for the derivatives of the classical and generalized Hermite polynomials are well known, as are the differential equations which they satisfy
and the differential equation
\begin{equation}\label{Hdiffeq}
n\mathcal{H}_n^{(\alpha)}(x) = x D \mathcal{H}_n^{(\alpha)}(x) - \alpha D^2 \mathcal{H}_n^{(\alpha)}(x) \qquad (\alpha>0; \, n=0, 1, 2, \dots).
\end{equation}

The formula for the product of two classical Hermite polynomials (see \cite{carlitz})  
\begin{equation}\label{carlitz} 
H_n(x)H_m(x)=\sum_{i=0}^{\min \{m,n \}} 2^i i! \binom{m}{i} \binom{n}{i} H_{m+n-2 i}(x),
\end{equation}
when combined with equation (\ref{clvsgen}), yields a product formula for the generalized Hermite polynomials:
\begin{equation}\label{prodgen}
\mathcal{H}_n^{(\alpha)}(x)\mathcal{H}_m^{(\alpha)}(x)=\sum_{i=0}^{\min \{m,n \}} \alpha^i i! \binom{m}{i}\binom{n}{i}\mathcal{H}^{(\alpha)}_{m+n-2 i}(x).
\end{equation}

For the convenience of the reader, we now summarize known results on generalized Hermite multiplier sequences. As a consequence of orthogonality (see \cite[Theorem 3.3.4]{szego}), any sequence of the form
$$
\{0, 0, \dots, 0, a, b, 0, 0, \dots\} \qquad (a,b\in\mbb{R})
$$
is an $\mathcal{H}^{(\alpha)}$-multiplier sequence. These sequences are called {\it trivial $\mathcal{H}^{(\alpha)}$-multiplier sequences} and will not be considered in the remainder of the paper. 

The terms of an $\mathcal{H}^{(\alpha)}$-multiplier seqeunce are either all of the same sign, or alternate in sign. Since $\mathcal{H}^{(\alpha)}_n(x)$ is an even function for even $n$ and an odd function for odd $n$, the real sequences 
$$
\{\gamma_k\}_{k=0}^{\infty}, \quad \{-\gamma_k\}_{k=0}^{\infty}, \quad \{(-1)^k\gamma_k\}_{k=0}^{\infty}, \quad \{(-1)^{k+1}\gamma_k\}_{k=0}^{\infty}
$$
are either all $\mathcal{H}^{(\alpha)}$-multiplier sequences, or none of them are. Consequently, when characterizing $\mathcal{H}^{(\alpha)}$-multiplier sequences, it suffices to consider sequences of non-negative terms. 
\begin{thm}\cite[Theorem 152 and Lemma 161]{andrzej}\label{HMSchar}
A sequence $\{\gamma_k\}_{k=0}^{\infty}$ of non-negative terms is a (non-trivial) $\mathcal{H}^{(\alpha)}$-multiplier sequence if and only if 
$$
\varphi(x) := \sum_{k=0}^{\infty} \frac{\gamma_k}{k!}x^k \in \LP^+
$$
with $\sigma\geq 1$ (cf. Definition \ref{LPdef}). 
\end{thm}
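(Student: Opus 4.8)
The plan is to reduce everything to classical $\LP$-facts through the intertwining $T=E_\alpha\circ S\circ E_\alpha^{-1}$, where $E_\alpha:=e^{-\frac{\alpha}{2}D^2}$ is the operator sending $x^n$ to $\mathcal{H}_n^{(\alpha)}(x)$ (consistent with (\ref{Hlim}), since $E_\alpha\to I$ as $\alpha\to 0^+$) and $S$ is the standard-basis-diagonal operator $S[x^n]=\gamma_n x^n$; indeed $T[\mathcal{H}_n^{(\alpha)}]=\gamma_n\mathcal{H}_n^{(\alpha)}$ is equivalent to $S[x^n]=\gamma_n x^n$ after conjugation. Two elementary facts drive the argument. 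First, $E_\alpha$, and more generally $e^{-\mu D^2}$ for $\mu\ge 0$, preserves $\LP$: since $e^{-\mu z^2}\in\LP$, this is an instance of the Hermite--Poulain theorem. Second, the dilation identity $\mathcal{H}_n^{(\alpha)}(\lambda x)=\lambda^n\mathcal{H}_n^{(\alpha/\lambda^2)}(x)$ shows that conjugating $T$ by $x\mapsto\lambda x$ produces the Hermite diagonal operator for the same $\{\gamma_k\}$ with parameter $\alpha/\lambda^2$; in particular the property of being an $\mathcal{H}^{(\alpha)}$-multiplier sequence is independent of $\alpha$, which is exactly why the right-hand side of the characterization is $\alpha$-free.

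For necessity I would first establish $\varphi\in\LP^+$. By the dilation remark $\{\gamma_k\}$ is an $\mathcal{H}^{(\alpha')}$-multiplier sequence for every $\alpha'>0$; letting $\alpha'\to 0^+$ and using $E_{\alpha'}\to I$, Hurwitz's theorem shows that the limiting operator $S$ preserves reality on the standard basis, so Theorem \ref{PStrans} gives $\varphi\in\LP^+$. The delicate point is the exponential-type bound $\sigma\ge 1$, and here I would exploit that the hidden factor $E_\alpha^{-1}=e^{+\frac{\alpha}{2}D^2}$ is precisely the reality-destroying ``wrong-sign'' heat operator. The geometric case already exhibits the mechanism: for $\gamma_k=\sigma^k$ one computes, writing $S$ as the dilation $p\mapsto p(\sigma x)$,
\[
T[p](x)=\left(e^{-\mu D^2}p\right)(\sigma x),\qquad \mu=\tfrac{\alpha}{2}(\sigma^2-1),
\]
so that for $\sigma<1$ the operator $e^{+|\mu|D^2}$ sends the real-rooted polynomial $x^2-r^2$ (with $r$ small) to $x^2+\bigl(\alpha(1-\sigma^2)-r^2\bigr)$, which has no real zeros. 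The main obstacle is promoting this from the geometric case to an arbitrary $\varphi\in\LP^+$ of type $\sigma<1$; I expect to handle it by isolating the exponential type of $\varphi$ via a renormalization of the tail of $\{\gamma_k\}$ (equivalently, by showing that $\sigma\ge 1$ is the same as $e^{-x}\varphi\in\LP^+$) and then reducing to the geometric obstruction above.

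For sufficiency I would use that the family of $\mathcal{H}^{(\alpha)}$-multiplier sequences is closed under Hadamard (termwise) products, because the associated operators commute and compose and compositions of reality preservers preserve reality, and under locally uniform limits by Hurwitz. It then suffices to verify the statement on a generating set of building blocks and to prove a purely $\LP$-theoretic lemma: every coefficient sequence of a function in $\LP^+$ with $\sigma\ge 1$ is a limit of finite Hadamard products of the blocks $\{\sigma^k\}$ (with $\sigma\ge 1$) and $\{1+\beta k\}$ (with $\beta\ge 0$). The geometric blocks are dispatched exactly as above: the displayed formula gives $\mu\ge 0$, and Hermite--Poulain applies. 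The blocks $\{1+\beta k\}$ correspond, via the eigenrelation (\ref{Hdiffeq}), to the second-order operator $I+\beta\bigl(xD-\alpha D^2\bigr)$, whose reality preservation I would deduce from the Borcea--Br\"and\'en characterization \cite{BB} (the operator has order two, so the question reduces to a stability property of an explicit bivariate symbol).

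Thus I expect two steps to carry the real difficulty: the necessity of $\sigma\ge 1$ for general, especially infinite-product, sequences, and the reality preservation of the blocks $I+\beta(xD-\alpha D^2)$. Both come down to controlling how the destructive operator $e^{+\frac{\alpha}{2}D^2}$ interacts with the multiplier structure. The reason the threshold is exactly $\sigma=1$ should then be transparent from the generation lemma: even the smallest non-trivial block $\{1+\beta k\}$ already secretes a factor $e^x$, since its generating function is $(1+\beta x)e^x$ of type $1$, so no $\LP^+$ function of type less than $1$ can be produced.
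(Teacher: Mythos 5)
The paper does not prove Theorem \ref{HMSchar}; it is imported from \cite{andrzej}, so there is no in-paper proof to compare against and your proposal must stand on its own. The scaffolding you set up is correct as far as it goes: $e^{-\frac{\alpha}{2}D^2}$ does map $x^n$ to $\mathcal{H}_n^{(\alpha)}(x)$, the dilation identity and the resulting $\alpha$-independence are right, the Hurwitz limit $\alpha'\to 0^+$ does show that every $\mathcal{H}^{(\alpha)}$-multiplier sequence is a classical multiplier sequence (hence $\varphi\in\LP^+$), and the computation $T[p]=\bigl(e^{-\frac{\alpha}{2}(\sigma^2-1)D^2}p\bigr)(\sigma x)$ correctly kills the geometric sequences with $\sigma<1$. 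However, both steps that you yourself flag as carrying the real difficulty are missing, and one of them is built on a false lemma.

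The sufficiency argument fails at the generation lemma, because Hadamard (termwise) products of sequences do not correspond to products of their generating functions. The termwise product of $\{1+\beta_1k\}$ and $\{1+\beta_2k\}$ is $\{(1+\beta_1k)(1+\beta_2k)\}$, whereas the Taylor coefficient sequence of $e^{x}(1+\beta_1x)(1+\beta_2x)$ is $\{1+(\beta_1+\beta_2)k+\beta_1\beta_2k(k-1)\}$; these differ. Concretely, $e^{x}(1+x)^2\in\LP^+$ has $\sigma=1$ and coefficient sequence $\gamma_k=k^2+k+1$, which is \emph{not} a finite termwise product of your blocks: such a product equals $c\sigma^kP(k)$ with $P$ having only real nonpositive zeros, and growth forces $\sigma=1$ and $cP(y)=y^2+y+1$, which has non-real zeros. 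Nor can a limiting argument rescue this: the natural compactness argument produces limits interpolated by $c\tau^y\Phi(y)$ with $\tau\geq 1$ and $\Phi$ in $\LP^+$ as a function of $y$, and a growth comparison (first $\tau=1$ since $\Phi$ is nondecreasing on $[0,\infty)$, then at most two linear factors in $\Phi$) again forces $\Phi$ to be a real-rooted quadratic agreeing with $y^2+y+1$ on $\N_0$, a contradiction. So your generating set does not generate, and the sufficiency direction collapses. On the necessity side, the passage from the geometric obstruction to a general $\varphi$ with $0\leq\sigma<1$ is left at the level of ``I expect to handle it,'' and this is precisely the hard part: when $\varphi$ has infinitely many zeros the ratios $\gamma_{k+1}/\gamma_k$ (and likewise $g_{k+1}^*(-1)/g_k^*(-1)$) need not converge to $\sigma$ (resp.\ $\sigma-1$), which is exactly the difficulty that forces the paper's own Theorems \ref{finitelymanyzeros} and \ref{infinitelymanyzeros} to impose extra hypotheses in the infinite-zero case and motivates the open problems of Section \ref{open}. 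The equivalence of $\sigma\geq 1$ with $e^{-x}\varphi\in\LP^+$ is true and a reasonable pivot, but it does not by itself reduce the general case to the geometric one. As written, the proposal establishes the theorem only for geometric sequences.
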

 
%The generalized Hermite polynomials are orthogonal over $(-\infty, \infty)$ with respect to the weight function $\exp(-x^2/2\alpha)$. Thus, consecutive polynomials have interlacing zeros and so any s

\subsection{Auxiliary results} We close this section with two lemmas which will be used to determine the polynomial coefficients of a generalized Hermite diagonal operator.% despite their simplicity, will play important roles in the proofs of the main results.
\begin{lem}(\cite[Ch.2]{riordan})\label{gslem}
Let $\sq$ be a sequence of real numbers, and for $k \in\mbb{N}_0$ let $g_k^*(x)$ be defined as in (\ref{gstar}).  Then for every $n \in\mbb{N}_0$,
\[
\gamma_n=\sum_{k=0}^{n}\binom{n}{k} g_k^*(-1).
\]
\end{lem}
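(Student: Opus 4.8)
The plan is to prove this by a direct double-summation argument, recognizing the right-hand side as the inverse binomial transform of the sequence $\{g_k^*(-1)\}$. First I would unwind the definition (\ref{gstar}) at $x=-1$, writing
\[
g_k^*(-1) = \sum_{j=0}^{k} \binom{k}{j} \gamma_j (-1)^{k-j},
\]
and substitute this into the claimed sum $\sum_{k=0}^{n} \binom{n}{k} g_k^*(-1)$, obtaining a double sum over the triangular region $0 \le j \le k \le n$.

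The crucial step is then to interchange the order of summation and isolate the dependence on $\gamma_j$, so that the expression becomes
\[
\sum_{j=0}^{n} \gamma_j \sum_{k=j}^{n} \binom{n}{k}\binom{k}{j}(-1)^{k-j}.
\]
To evaluate the inner sum I would apply the subset-of-a-subset (trinomial revision) identity $\binom{n}{k}\binom{k}{j} = \binom{n}{j}\binom{n-j}{k-j}$, pull the factor $\binom{n}{j}$ out, and reindex with $i = k-j$. The inner sum then collapses to $\binom{n}{j}(1-1)^{n-j} = \binom{n}{j}\, 0^{\,n-j}$ by the binomial theorem, which vanishes for every $j < n$ and equals $1$ when $j = n$. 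Hence only the $j=n$ term survives, and the total reduces to $\gamma_n$, as desired.

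The argument is essentially routine; the only point requiring genuine care is the bookkeeping in the interchange of summation and the correct application of the trinomial revision identity, so I would double-check the index ranges there. Alternatively, one could bypass the direct computation entirely by invoking the binomial inversion theorem: equation (\ref{gstar}) at $x=-1$ exhibits $g_k^*(-1)$ as the signed binomial transform $\sum_{j=0}^{k}(-1)^{k-j}\binom{k}{j}\gamma_j$ of the sequence $\{\gamma_j\}$, and the stated identity is exactly its standard inverse. Since the lemma is attributed to Riordan, this inversion viewpoint is likely the intended route, but the self-contained computation sketched above makes the paper independent of that reference.
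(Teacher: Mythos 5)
Your argument is correct and complete: the interchange of summation over the triangle $0 \le j \le k \le n$, the identity $\binom{n}{k}\binom{k}{j} = \binom{n}{j}\binom{n-j}{k-j}$, and the collapse of the inner sum to $(1-1)^{n-j}$ (which vanishes unless $j=n$) are all applied properly. Note that the paper itself offers no proof of this lemma — it simply cites Riordan — so your self-contained computation, which is the standard binomial-inversion argument the citation points to, supplies exactly what the paper leaves to the reference.
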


\begin{lem}\label{tom} For all $n\in\mbb{N}$ and all $j\in\{0,1,2,\dots,[n/2]\}$, 
\begin{equation}\label{minsum}
\sum_{k=2j}^{n}\sum_{i=0}^{M} a_{k,i} = \sum_{i=0}^{[n/2]-j}\sum_{k=i+2j}^{n-i} a_{k,i} \qquad (M = \min\{k-2j, n-k\}).
\end{equation}
\end{lem}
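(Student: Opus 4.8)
The plan is to recognize both sides of (\ref{minsum}) as the sum of $a_{k,i}$ over one and the same set $S$ of index pairs $(k,i)$, so that the asserted identity is nothing more than an interchange of the order of summation (Fubini's principle for finite sums). First I would describe the left-hand side as the sum over
\[
S_L = \left\{ (k,i) : 2j \le k \le n,\ 0 \le i \le \min\{k-2j,\, n-k\} \right\}.
\]
Unpacking the single inequality $i \le \min\{k-2j,\, n-k\}$ into the two inequalities $i \le k-2j$ and $i \le n-k$, and observing that once $i \ge 0$ the constraints $k \ge 2j$ and $k \le n$ are automatic (indeed $k \ge i+2j \ge 2j$ and $k \le n-i \le n$), I would rewrite
\[
S_L = \left\{ (k,i) : i \ge 0,\ i+2j \le k \le n-i \right\}.
\]

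Next I would describe the right-hand side as the sum over
\[
S_R = \left\{ (k,i) : 0 \le i \le [n/2]-j,\ i+2j \le k \le n-i \right\}.
\]
The only apparent discrepancy between $S_L$ and $S_R$ is the explicit upper bound $i \le [n/2]-j$ appearing in $S_R$. I would show that this bound is in fact forced in $S_L$ as well: for the range $i+2j \le k \le n-i$ to contain any integer $k$ at all, one needs $i+2j \le n-i$, that is $2i \le n-2j$. Since $i$ is an integer, this is equivalent to $i \le \left[\tfrac{n-2j}{2}\right]$; and because $j$ is an integer (so $2j$ is even), the integer part distributes across the subtraction to give $\left[\tfrac{n-2j}{2}\right] = [n/2]-j$. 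Hence every pair of $S_L$ automatically satisfies $i \le [n/2]-j$, and the two index sets coincide, $S_L = S_R$.

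With $S := S_L = S_R$ in hand, both displayed double sums in (\ref{minsum}) equal $\sum_{(k,i) \in S} a_{k,i}$, which establishes the identity. I expect the only point genuinely requiring care to be the integer-part computation $\left[\tfrac{n-2j}{2}\right] = [n/2]-j$, whose validity rests on the hypothesis $j \in \mathbb{N}$ ensuring $2j$ is even; everything else is a routine reindexing of a finite double sum.
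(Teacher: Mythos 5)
Your proof is correct, and it takes a genuinely different route from the paper's. The paper proves the identity by splitting the outer sum at $k=[n/2]+j$ (the point where $\min\{k-2j,n-k\}$ switches from $k-2j$ to $n-k$), interchanging the order of summation separately on each piece, and then recombining the two resulting double sums --- which forces a case analysis on the parity of $n$, since the recombination works out slightly differently according to whether $n-[n/2]$ equals $[n/2]$ or $[n/2]+1$. Your argument avoids all of this by exhibiting both sides as the sum of $a_{k,i}$ over a single index set $S=\{(k,i): i\ge 0,\ i+2j\le k\le n-i\}$, so that the identity is a pure Fubini interchange; the only computation needed is $\left[\frac{n-2j}{2}\right]=[n/2]-j$, which you correctly justify from $2j$ being even. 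Two small points worth noting: the bound $i\le[n/2]-j$ in $S_R$ is not merely consistent with $S_L$ but is exactly the condition for the inner $k$-range to be nonempty, so no terms are gained or lost on either side; and your parenthetical remark that $M\ge 0$ throughout the left-hand sum (because $2j\le k\le n$) is what guarantees the inner sums there are never vacuous in an unexpected way. Your approach is shorter and parity-free; the paper's is more mechanical but makes the interchange explicit step by step. Either is acceptable.
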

\begin{proof}
Suppose $n\in\mbb{N}$ and $j\in\{0,1,2,\dots,[n/2]\}$.  Note that $\min\{k-2j, n-k\} = k-2j$ if and only if $k\leq n/2 +j$ and, since all quantities involved are integers, this holds if and only if $k\leq [n/2]+j$.  Thus
\begin{eqnarray*}
\sum_{k=2j}^{n}\sum_{i=0}^{M} a_{k,i} &=& \sum_{k=2j}^{[n/2]+j} \left(\sum_{i=0}^{k-2j} a_{k,i}\right) + \sum_{k=[n/2]+j+1}^{n} \left(\sum_{i=0}^{n-k} a_{k,i} \right)\\
&=& \sum_{i=0}^{[n/2]-j}\left( \sum_{k=2j+i}^{[n/2]+j} a_{k,i}\right) + \sum_{i=0}^{n-[n/2]-j-1}  \left(\sum_{k=[n/2]+j+1}^{n-i} a_{k,i}\right)
\end{eqnarray*}  

If $n$ is odd, then $n-[n/2] = [n/2]+1$ and so 
\begin{eqnarray*}
\sum_{k=2j}^{n}\sum_{i=0}^{M} a_{k,i} &=& \sum_{i=0}^{[n/2]-j} \left(\sum_{k=2j+i}^{[n/2]+j} a_{k,i}\right) + \sum_{i=0}^{[n/2]-j} \left( \sum_{k=[n/2]+j+1}^{n-i} a_{k,i}\right)\\
&=&\sum_{i=0}^{[n/2]-j} \left(\sum_{k=2j+i}^{n-i} a_{k,i}\right).
\end{eqnarray*}

If $n$ is even, then $n-[n/2] = [n/2]$ and 
\begin{eqnarray*}
\sum_{k=2j}^{n}\sum_{i=0}^{M} a_{k,i}&=& \sum_{i=0}^{[n/2]-j}\left( \sum_{k=2j+i}^{[n/2]+j} a_{k,i}\right) + \sum_{i=0}^{[n/2]-j-1} \left( \sum_{k=[n/2]+j+1}^{n-i} a_{k,i} \right)\\
&=& \sum_{i=0}^{[n/2]-j-1} \left(\sum_{k=2j+i}^{n-i} a_{k,i} \right)+ a_{[n/2]+j,[n/2]-j}\\
&=& \sum_{i=0}^{[n/2]-j}\left( \sum_{k=2j+i}^{n-i} a_{k,i}\right).
\end{eqnarray*}

\end{proof}

\section{Main results}\label{mainresults}
We now present the central results of the paper. Theorem \ref{Tkthm} provides a closed form for the coefficient polynomials $Q_k(x)$ of an $\mathcal{H}^{(\alpha)}$-diagonal linear operator $\displaystyle{T=\sum Q_k(x)D^k}$, while Theorems \ref{tkreal}, \ref{finitelymanyzeros}  and \ref{infinitelymanyzeros} examine $\mathcal{H}^{(\alpha)}$-multiplier sequences in terms of the reality of the zeros of the polynomials $Q_k(x)$. 

\subsection{The coefficient polynomials of a Hermite diagonal linear operator}
\begin{thm} \label{Tkthm} Suppose $\alpha>0$ and $\sq$ is a sequence of real numbers.  If the linear operator $T:\mathbb{R}[x]\rightarrow\mathbb{R}[x]$ is defined by $\ds T\left[\mathcal{H}^{(\alpha)}_n(x)\right] = \gamma_n \mathcal{H}^{(\alpha)}_n(x)$ for all $n$, then $\ds T = \sum_{k=0}^{\infty} Q_k(x) D^k$, where
\begin{equation} \label{Tk}
Q_k(x) = \sum_{j=0}^{[k/2]} \frac{(-\alpha)^j}{j!(k-2j)!}g_{k-j}^*(-1) \mathcal{H}_{k-2j}^{(\alpha)}(x) \qquad (k=0,1,2,\dots),
\end{equation}
and $g_{k-j}^*(x)$ is defined as in equation $(\ref{gstar})$.
\end{thm}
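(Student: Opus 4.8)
The plan is to use the fact that both $T$ and the candidate operator $\ds{S:=\sum_{k=0}^{\infty}Q_k(x)D^k}$ are linear operators on $\mbb{R}[x]$ and that $\{\mathcal{H}_n^{(\alpha)}(x)\}_{n=0}^{\infty}$ is a basis; hence it suffices to check that $\ds{S[\mathcal{H}_n^{(\alpha)}(x)]=\gamma_n\mathcal{H}_n^{(\alpha)}(x)}$ for every $n\in\mbb{N}_0$. (The operator $S$ is well defined on each polynomial because $D^k$ annihilates $\mathcal{H}_n^{(\alpha)}(x)$ for $k>n$, so only finitely many terms contribute.) First I would iterate the differentiation formula (\ref{derher}) to obtain $\ds{D^k\mathcal{H}_n^{(\alpha)}(x)=\frac{n!}{(n-k)!}\mathcal{H}_{n-k}^{(\alpha)}(x)}$ for $0\le k\le n$, which turns $S[\mathcal{H}_n^{(\alpha)}(x)]$ into the finite sum $\ds{\sum_{k=0}^{n}\frac{n!}{(n-k)!}\,Q_k(x)\,\mathcal{H}_{n-k}^{(\alpha)}(x)}$.

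Next I would insert the definition (\ref{Tk}) of $Q_k(x)$ and expand every product $\mathcal{H}_{k-2j}^{(\alpha)}(x)\,\mathcal{H}_{n-k}^{(\alpha)}(x)$ by the generalized Hermite product formula (\ref{prodgen}). Each resulting summand is a scalar multiple of $\mathcal{H}_{n-2j-2i}^{(\alpha)}(x)$, so the key step is to collect terms according to the output index $n-2p$, where $p:=i+j$. Setting $i=p-j$ and simplifying, using $(-\alpha)^j\alpha^{p-j}=(-1)^j\alpha^p$ together with the binomial cancellations among the factorials, the coefficient of $\mathcal{H}_{n-2p}^{(\alpha)}(x)$ should collapse (after the substitution $\ell=k-j$) into a product of two independent sums,
\[
\alpha^p\left(\sum_{\ell=p}^{n-p}\frac{n!}{(\ell-p)!\,(n-\ell-p)!}\,g_{\ell}^*(-1)\right)\left(\sum_{j=0}^{p}\frac{(-1)^j}{j!\,(p-j)!}\right).
\]
The inner sum equals $\ds{\frac{1}{p!}\sum_{j=0}^{p}(-1)^j\binom{p}{j}=\frac{(1-1)^p}{p!}}$, which vanishes for every $p\ge 1$ and equals $1$ for $p=0$. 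Thus only the $p=0$ term survives, and it equals $\ds{\sum_{\ell=0}^{n}\binom{n}{\ell}g_{\ell}^*(-1)=\gamma_n}$ by Lemma \ref{gslem}, giving $S[\mathcal{H}_n^{(\alpha)}(x)]=\gamma_n\mathcal{H}_n^{(\alpha)}(x)$ as required.

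I expect the main obstacle to be the combinatorial bookkeeping that justifies the decoupling of the summation indices --- in particular, showing that once terms are grouped by $p$, the index $j$ runs over the full range $\{0,1,\dots,p\}$, so that the alternating-binomial sum genuinely reduces to $(1-1)^p$. The three indices $k$, $j$, $i$ are coupled through the constraints $0\le j\le[k/2]$ and $0\le i\le\min\{k-2j,\,n-k\}$, so a naive interchange of summations is illegitimate; this is exactly the situation Lemma \ref{tom} is designed to handle. Concretely, for each fixed $j$ the $(k,i)$-sum has precisely the form appearing on the left-hand side of (\ref{minsum}), and applying Lemma \ref{tom} reorganizes it into the form needed to collect by $p$ and to exhibit the product structure above. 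Verifying that the hypotheses of Lemma \ref{tom} hold and carefully tracking the factorial simplifications through this reindexing is the delicate part of the argument; once the sums are disentangled, the alternating-binomial collapse together with Lemma \ref{gslem} finishes the proof.
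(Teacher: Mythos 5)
Your proposal is correct and follows essentially the same route as the paper's own proof: iterate the differentiation formula (\ref{derher}), expand via the product formula (\ref{prodgen}), invoke Lemma \ref{tom} to disentangle the coupled indices, collect by $p=i+j$ so the alternating binomial sum $(1-1)^p$ kills every term with $p\geq 1$, and finish with Lemma \ref{gslem}. You have also correctly identified the one genuinely delicate point (the legitimacy of the reindexing), which is exactly what Lemma \ref{tom} is used for in the paper.
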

\begin{rem} For ease of exposition, in our notation $Q_k(x)$ we suppress the obvious dependence on $\alpha$, unless we need to think of $\alpha$ as a variable. In this case we will write $Q^{\alpha}_k(x)$.
\end{rem}

\begin{proof}
We will show that if the polynomials $Q_k(x)$ are defined as in the theorem, then $T[\mathcal{H}_n^{(\alpha)}(x)] = \gamma_n \mathcal{H}_n^{(\alpha)}(x)$.  The fact that a differential operator representation of a linear operator is unique will then yield the desired result. 
Let 
$$
S = \sum_{k=0}^{\infty} Q_k(x) D^k \mathcal{H}_n^{(\alpha)}(x).
$$
By repeated application of the formula for the derivative of the $n^{th}$ generalized Hermite polynomial ($\ref{derher}$), we obtain
\begin{eqnarray*}
S &=& \sum_{k=0}^n \sum_{j=0}^{[k/2]} \frac{(-\alpha)^j}{j!(k-2j)!} g_{k-j}^*(-1) \mathcal{H}_{k-2j}^{(\alpha)}(x) \frac{n!}{(n-k)!} \mathcal{H}_{n-k}^{(\alpha)}(x). 
\end{eqnarray*}
Applying the formula for the product of generalized Hermite polynomials ($\ref{prodgen}$), and simplifying, leads to
\begin{eqnarray*}
S &=& \sum_{k=0}^n \sum_{j=0}^{[k/2]} \sum_{i=0}^{M} \frac{(-1)^j \alpha^{i+j} n! g_{k-j}^*(-1)}{j!i!(k-2j-i)!(n-k-i)!}  \mathcal{H}_{n-2j-2i}^{(\alpha)}(x),
\end{eqnarray*}
where $M=\min\{k-j, n-k\}$.
Changing the order of the first two sums yields
\begin{eqnarray*}
S &=& \sum_{j=0}^{[n/2]} \sum_{k=2j}^{n} \sum_{i=0}^{M} \frac{(-1)^j \alpha^{i+j} n! g_{k-j}^*(-1)}{j!i!(k-2j-i)!(n-k-i)!}  \mathcal{H}_{n-2j-2i}^{(\alpha)}(x).
\end{eqnarray*}
We now apply Lemma $\ref{tom}$, make change of variables $m=j+i$, and change the order of summation of the first two sums again to arrive at 
\begin{eqnarray*}
 S&=& \sum_{j=0}^{[n/2]} \sum_{i=0}^{[n/2]-j} \sum_{k=i+2j}^{n-i} \frac{(-1)^j \alpha^{i+j} n! g_{k-j}^*(-1)}{j!i!(k-2j-i)!(n-k-i)!}  \mathcal{H}_{n-2j-2i}^{(\alpha)}(x)\\
 &=& \sum_{j=0}^{[n/2]} \sum_{m=j}^{[n/2]} \sum_{k=m+j}^{n-m+j} \frac{(-1)^j \alpha^{m} n! g_{k-j}^*(-1)}{j!(m-j)!(k-m-j)!(n-k-m+j)!}  \mathcal{H}_{n-2m}^{(\alpha)}(x)\\
&=&\sum_{m=0}^{[n/2]} \sum_{j=0}^{m} \sum_{k=m+j}^{n-m+j} \frac{(-1)^j \alpha^{m} n! g_{k-j}^*(-1)}{j!(m-j)!(k-m-j)!(n-k-m+j)!}  \mathcal{H}_{n-2m}^{(\alpha)}(x).
\end{eqnarray*}
Substituting $p=k+j$ and changing the order of summation of the last two sums, together with the binomial theorem, produces
\begin{eqnarray*}
S&=&\sum_{m=0}^{[n/2]} \sum_{j=0}^{m} \sum_{p=m}^{n-m} \frac{(-1)^j \alpha^{m} n! g_{p}^*(-1)}{j!(m-j)!(p-m)!(n-p-m)!}  \mathcal{H}_{n-2m}^{(\alpha)}(x)\\
&=&\sum_{m=0}^{[n/2]} \sum_{p=m}^{n-m} \sum_{j=0}^{m} \frac{(-1)^j \alpha^{m} n! g_{p}^*(-1)}{j!(m-j)!(p-m)!(n-p-m)!}  \mathcal{H}_{n-2m}^{(\alpha)}(x)\\
&=&\sum_{m=0}^{[n/2]} \sum_{p=m}^{n-m} \frac{ \alpha^{m} n! g_{p}^*(-1)}{(p-m)!(n-p-m)!m!}  \sum_{j=0}^{m}(-1)^j \frac{m!}{j!(m-j)!}  \mathcal{H}_{n-2m}^{(\alpha)}(x)\\
&=&\sum_{m=0}^{[n/2]} \sum_{p=m}^{n-m} \frac{ \alpha^{m} n! g_{p}^*(-1)}{(p-m)!(n-p-m)!m!}  (1-1)^m  \mathcal{H}_{n-2m}^{(\alpha)}(x)\\
&=&\sum_{p=0}^{n} \frac{ n! g_{p}^*(-1)}{p!(n-p)!} \mathcal{H}_{n}^{(\alpha)}(x)\\
&=&\sum_{p=0}^{n} \binom{n}{p} g_{p}^*(-1) \mathcal{H}_{n}^{(\alpha)}(x)\\
&=&\gamma_n \mathcal{H}_n^{(\alpha)}(x),
\end{eqnarray*}
where we used Lemma \ref{gslem} in the last step.

\end{proof} 
As a corollary of Theorem \ref{Tkthm} we can now give a new proof for Proposition \ref{sb}.
\begin{prop}\label{hlimitstandard} If $\sq$ is a sequence of real numbers, then the linear operator $T:\mbb{R}[x]\rightarrow\mbb{R}[x]$ defined by $T[x^n] = \gamma_n x^n$ for all $n$ has the representation 

\[
T = \sum_{k=0}^{\infty} \frac{g_k^*(-1)}{k!} x^kD^k.
\]
\end{prop}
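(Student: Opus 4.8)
The plan is to obtain this as a genuine corollary of Theorem~\ref{Tkthm} by letting the Hermite parameter $\alpha$ tend to $0^+$. For a fixed real sequence $\sq$ and each $\alpha>0$, let $T_\alpha$ denote the $\mathcal{H}^{(\alpha)}$-diagonal operator with $T_\alpha[\mathcal{H}_n^{(\alpha)}(x)]=\gamma_n\mathcal{H}_n^{(\alpha)}(x)$. Theorem~\ref{Tkthm} expresses $T_\alpha=\sum_k Q_k^\alpha(x)D^k$ with the $Q_k^\alpha(x)$ given by~(\ref{Tk}). Since the limiting relation~(\ref{Hlim}) identifies $x^n$ as the $\alpha\to 0^+$ limit of $\mathcal{H}_n^{(\alpha)}(x)$, one expects $T_\alpha$ to degenerate into the standard-basis diagonal operator $T$ as $\alpha\to 0^+$, and the claimed representation to emerge as the limit of~(\ref{Tk}).

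The first step is to compute $\lim_{\alpha\to 0^+}Q_k^\alpha(x)$. In the sum~(\ref{Tk}) the coefficients $g_{k-j}^*(-1)$ are independent of $\alpha$, each factor $\mathcal{H}_{k-2j}^{(\alpha)}(x)$ converges to $x^{k-2j}$ by~(\ref{Hlim}), and the factor $(-\alpha)^j$ vanishes in the limit for every $j\geq 1$. Hence only the $j=0$ term survives, giving
\[
\lim_{\alpha\to 0^+}Q_k^\alpha(x)=\frac{g_k^*(-1)}{k!}\,x^k,
\]
which is precisely the coefficient polynomial appearing in the statement.

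To convert this into the operator identity, I would pass to the limit in the eigenvalue relation itself. For a fixed $n$ and every $\alpha>0$, Theorem~\ref{Tkthm} yields the polynomial identity $\sum_{k=0}^n Q_k^\alpha(x)\,D^k\mathcal{H}_n^{(\alpha)}(x)=\gamma_n\mathcal{H}_n^{(\alpha)}(x)$, where the differentiation formula~(\ref{derher}) gives $D^k\mathcal{H}_n^{(\alpha)}(x)=\tfrac{n!}{(n-k)!}\mathcal{H}_{n-k}^{(\alpha)}(x)$. Letting $\alpha\to 0^+$ and using~(\ref{Hlim}) together with the coefficient limit above, the right-hand side tends to $\gamma_n x^n$, while the left-hand side tends to $\sum_{k=0}^n \tfrac{g_k^*(-1)}{k!}\,x^k D^k x^n$. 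Thus the operator $U:=\sum_k \tfrac{g_k^*(-1)}{k!}x^kD^k$ satisfies $U[x^n]=\gamma_n x^n$ for every $n$, and since the differential-operator representation of a linear operator on $\R[x]$ is unique, $U=T$, as claimed.

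The interchange of the limit with the (finite) sum and with differentiation is the only point requiring justification, but it is immediate: for a fixed degree only finitely many terms are nonzero, and each is a polynomial in $x$ whose coefficients depend continuously on $\alpha$, so the genuine input is simply~(\ref{Hlim}). I therefore do not anticipate a real obstacle here. As a sanity check one can bypass the limit entirely and verify $U[x^n]=\sum_{k=0}^n\binom{n}{k}g_k^*(-1)\,x^n=\gamma_n x^n$ directly from Lemma~\ref{gslem}; the value of the limiting argument is rather that it exhibits the standard-basis coefficients as degenerations of the Hermite ones as $\alpha\to 0^+$.
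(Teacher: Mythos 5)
Your proposal is correct and follows essentially the same route as the paper: both obtain the result by applying Theorem \ref{Tkthm} to the family $T_\alpha$ and letting $\alpha\to 0^+$, using (\ref{Hlim}) to see that only the $j=0$ term of (\ref{Tk}) survives. You actually supply more justification for passing to the limit than the paper's own (quite terse) proof does, and your closing remark that the identity also follows directly from Lemma \ref{gslem} matches the paper's original Proposition \ref{sb}.
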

\begin{proof} Let $\alpha >0$, and consider the family of operators $T_{\alpha}:\R[x] \to \R[x]$ given by $T_{\alpha}[\mathcal{H}_n^{(\alpha)}(x)]=\gamma_n \mathcal{H}_n^{(\alpha)}(x)$. By Theorem \ref{Tkthm} if $T_{\alpha}=\sum Q^{\alpha}_k(x) D^k$, then
\[
Q^{\alpha}_k(x) = \sum_{j=0}^{[k/2]} \frac{(-\alpha)^j}{j!(k-2j)!}g_{k-j}^*(-1) H_{k-2j}^{(\alpha)}(x) \qquad (k=0,1,2,\dots).
\]
From the relation $(\ref{Hlim})$, letting $\alpha \to 0$ yields the operator $T[x^n]=\gamma_n x^n$ for all $n$, with coefficient polynomials 
\[
\lim_{\alpha\to 0^+}Q_k^\alpha(x) =\frac{g_k^*(-1)}{k!}x^k.
\]
\end{proof}
\subsection{The reality of the zeros of $Q_k(x)$}\label{realityresults}
In light of the representation (\ref{diffop}), it is a natural question to ask whether (and how) $T: \R[x] \to \R[x]$ being reality preserving is encoded in the properties of the coefficient polynomials $Q_k(x)$. The following result of Chasse (\cite[Proposition 209]{chasse}) suggests that reality preserving linear operators may distinguish themselves by having coefficient polynomials with only real zeros. 
\begin{prop}\label{finiteorder} If the operator $T: \R[x] \to \R[x]$ is reality preserving and if $T$ can be represented as a differential operator of finite order
\[
T=\sum_{k=0}^N Q_k(x)D^k, \qquad (Q_k(x) \in \R[x]),
\]
then the polynomials $Q_k(x)$ have only real zeros.
\end{prop}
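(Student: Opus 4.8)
The plan is to exploit the action of $T$ on a cleverly chosen one-parameter family of polynomials, reducing the reality of the zeros of each $Q_k$ to the hypothesis that $T$ preserves reality. The key observation is that if I apply $T$ to the shifted monomial $(x-a)^N$ for a real parameter $a$, then the derivative $D^k(x-a)^N$ produces a scalar multiple of $(x-a)^{N-k}$, and all terms with $k' < k$ contribute factors of $(x-a)$ to higher powers. First I would compute $T[(x-a)^N] = \sum_{k=0}^N Q_k(x) \frac{N!}{(N-k)!}(x-a)^{N-k}$. Evaluating and differentiating this identity at $x=a$ isolates individual coefficient polynomials: repeated differentiation with respect to $x$ followed by setting $x=a$ extracts $Q_k(a)$ up to known constants.

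The cleaner route, which I would pursue, is to test $T$ on the family $f_\lambda(x) = (1+\lambda x)^N$ or more flexibly on $(x+t)^N$ and track the leading behaviour. Since $(x+t)^N$ has only the real zero $-t$ (with multiplicity $N$), the image $T[(x+t)^N]$ must have only real zeros for every real $t$; this is a polynomial in $x$ whose coefficients are polynomials in $t$. The idea is to let $t \to \infty$ after an appropriate rescaling so that the highest-order differential term dominates and $Q_N(x)$ emerges as a limit of reality-preserving-forced polynomials. Concretely, I would argue that $Q_N(x)$ is (up to a constant) a limit, uniform on compact sets, of polynomials $T[(x+t)^N]$ suitably normalized, each of which has only real zeros; by Hurwitz's theorem, the limit then has only real zeros (or is identically zero). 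Having established reality of zeros for the top coefficient $Q_N$, I would induct downward: subtract off the contribution of $Q_N(x)D^N$ and apply the same argument to the reduced operator $\sum_{k=0}^{N-1}Q_k(x)D^k$, which is still reality preserving when restricted to the relevant family, to obtain reality of the zeros of $Q_{N-1}$, and so on.

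The step I expect to be the main obstacle is justifying the limiting/normalization procedure rigorously, namely ensuring that after rescaling the variable and the parameter, the sequence of reality-preserving images converges (on compact subsets of $\C$) to a nonzero constant multiple of $Q_k(x)$ rather than degenerating. One must verify that the lower-order terms $Q_{k'}(x)D^{k'}(x+t)^N$ with $k' < k$ become negligible after the chosen scaling, while the target term survives; this requires a careful accounting of the powers of $t$ carried by each summand. Once the convergence is set up so that Hurwitz's theorem applies, the conclusion is immediate: a locally uniform limit of polynomials each having only real zeros, if not identically zero, has only real zeros, and degenerate limits are handled by the standard normalization by the leading coefficient. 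The downward induction then disposes of all the remaining $Q_k$.
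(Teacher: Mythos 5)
You are attempting to prove a statement the paper itself does not prove: Proposition \ref{finiteorder} is quoted from \cite[Proposition 209]{chasse}, so your argument has to stand on its own, and it does not. Two concrete problems. First, your scaling is backwards. Writing $T[(x+t)^N]=\sum_{k=0}^{N}\tfrac{N!}{(N-k)!}\,Q_k(x)(x+t)^{N-k}$, the factor $(x+t)^{N-k}$ grows like $t^{N-k}$, so as $t\to\infty$ it is the \emph{lowest}-order term $Q_0(x)(x+t)^N$ that dominates; normalizing by $t^N$ and applying Hurwitz gives the reality of the zeros of $Q_0$, not of $Q_N$. To make the top term survive you must send the \emph{degree} of the test polynomial to infinity, e.g. $T[(1+sx/n)^n]\to e^{sx}\sum_{k=0}^{N}Q_k(x)s^k$ locally uniformly as $n\to\infty$, which shows that the symbol $F_s(x):=\sum_{k=0}^{N}Q_k(x)s^k$ has only real zeros for every real $s$; the limits $s\to 0$ and $s\to\infty$ (after dividing by $s^N$) then recover $Q_0$ and $Q_N$. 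Second, the intermediate coefficients do not follow from any such one-parameter limit, and your downward induction does not close the gap: $T-Q_N(x)D^N$ is not known to be reality preserving, even on the family $(x+t)^n$, and ``reality preserving when restricted to the relevant family'' is not something you have established.

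The deeper obstruction is that the information you extract---real-rootedness of the image of a one-parameter family, for each fixed value of the parameter---is logically insufficient to force each $Q_k$ to be real-rooted. For example, with $Q_0(x)=Q_2(x)=x^2-1$ and $Q_1(x)=c(x^2+1)$ for $0<c<2$, the polynomial $\sum_{k=0}^{2}Q_k(x)s^k=(1+cs+s^2)x^2-(1-cs+s^2)$ has two real zeros for \emph{every} real $s$ (both displayed quadratics in $s$ are positive since $c^2<4$), yet $Q_1$ has no real zeros. Hence a correct proof must exploit a genuinely two-variable property of $T[(x+y)^n]$---real stability of the symbol in $(x,y)$, combined with the fact that the coefficients of a real stable polynomial, read off with respect to one variable, form an interlacing and hence real-rooted sequence---or an equivalent interlacing argument as the parameter varies. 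This is the route through the Borcea--Br\"and\'en theory on which the cited proof in \cite{chasse} rests, and it is precisely the ingredient your proposal lacks; it cannot be supplied by the normalization bookkeeping you identify as the ``main obstacle.''
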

Linear operators as in Proposition \ref{finiteorder} which are diagonal with respect to a simple set $\mathcal{B}=\seq{B_k(x)}$ are special in that their eigenvalues are interpolated by a polynomial. Indeed, if $\displaystyle{T=\sum_{k=0}^N Q_k(x)D^k}$ and $T[B_k(x)]=\gamma_kB_k(x)$ for all $k$, then one obtains
\begin{equation}\label{bates}
\gamma_n=\sum_{k=0}^N \binom{n}{k} Q^{(k)}_k(x).
\end{equation}
by equating leading coefficients. Hence the polynomial 
\[
p(x):=\sum_{k=0}^N \binom{x}{k} Q^{(k)}_k(x)
\]
has the property that $p(n)=\gamma_n$ for all $n \geq 0$. Since every function in $\LP^+$ interpolates an $\mathcal{H}^{(\alpha)}$-multiplier sequence (see \cite[Theorem 2.7]{BC} and \cite[Lemma 161]{andrzej}), it is useful to reformulate the result of the above discussion as follows.
\begin{prop}\label{herminfinite} If the $\mathcal{H}^{(\alpha)}$-multiplier sequence $\seq{\gamma_k}$ cannot be interpolated by a polynomial, then the differential operator representation of the associated linear operator is infinite.
\end{prop}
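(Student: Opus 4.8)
The plan is to prove the contrapositive, leaning directly on the leading-coefficient computation \eqref{bates} established in the discussion preceding the statement. Concretely, I would assume that the differential operator representation of the associated operator $T$ is \emph{finite}, say $T = \sum_{k=0}^{N} Q_k(x) D^k$ with $N < \infty$, and then exhibit an explicit polynomial interpolating the sequence $\seq{\gamma_k}$, contradicting the hypothesis.

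First I would record that the generalized Hermite polynomials $\{\mathcal{H}_n^{(\alpha)}(x)\}_{n=0}^{\infty}$ form a simple set, i.e. $\deg \mathcal{H}_n^{(\alpha)} = n$ for every $n$, so that the hypotheses behind the displayed identity \eqref{bates} are met. Since $T[\mathcal{H}_n^{(\alpha)}(x)] = \gamma_n \mathcal{H}_n^{(\alpha)}(x)$, equating leading coefficients on both sides of the equation $\sum_{k=0}^{N} Q_k(x) D^k \mathcal{H}_n^{(\alpha)}(x) = \gamma_n \mathcal{H}_n^{(\alpha)}(x)$ yields
\[
\gamma_n = \sum_{k=0}^{N} \binom{n}{k} Q_k^{(k)},
\]
where each $Q_k^{(k)}$ is the (constant) top derivative of the degree-$k$ polynomial $Q_k$. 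I would note that this identity in fact holds for \emph{all} $n \geq 0$: when $n < k$ the term $D^k \mathcal{H}_n^{(\alpha)}(x)$ vanishes, but so does the matching binomial coefficient $\binom{n}{k}$, so no information is lost for small $n$.

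Next I would set $p(x) := \sum_{k=0}^{N} \binom{x}{k} Q_k^{(k)}$. Because $\binom{x}{k}$ is a polynomial in $x$ of degree $k$, the function $p$ is a genuine polynomial of degree at most $N$, and by the previous display $p(n) = \gamma_n$ for every $n \geq 0$. Thus $\seq{\gamma_k}$ is interpolated by a polynomial, contradicting the hypothesis. Since the differential operator representation of $T$ is unique (as invoked in the proof of Theorem \ref{Tkthm}), the assumption that it is finite is untenable, and the representation must therefore be infinite.

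The one point requiring care is the justification of \eqref{bates} itself: one must verify that, for a simple set, the comparison of leading coefficients forces each surviving $Q_k$ to contribute only through its top-degree coefficient $Q_k^{(k)}/k!$, so that $Q_k^{(k)}$ is indeed a constant and $p$ is a bona fide polynomial of degree at most $N$. As this computation is already carried out in the paragraph establishing \eqref{bates}, the remaining argument is purely the contrapositive, and I anticipate no substantive obstacle beyond citing that computation and the uniqueness of the representation correctly.
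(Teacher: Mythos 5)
Your argument is exactly the paper's: Proposition \ref{herminfinite} is stated as the contrapositive reformulation of the discussion surrounding \eqref{bates}, where equating leading coefficients for a finite-order diagonal operator on a simple set yields the interpolating polynomial $p(x)=\sum_{k=0}^N \binom{x}{k}Q_k^{(k)}$. Your added remarks (that $\binom{n}{k}=0$ for $n<k$ and that $\deg Q_k\leq k$ makes $Q_k^{(k)}$ constant) are correct and only make explicit what the paper leaves implicit.
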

Chasse's result alone therefore cannot handle all operators associated with Hermite multiplier sequences. Nonetheless, as Theorem \ref{tkreal} will demonstrate, the essence of Proposition \ref{finiteorder} remains the same in the case of infinite order Hermite diagonal differential operators. We preface the statement and proof of this main result by the following lemma.

\begin{lem} \label{g*mult}
If $\seq{\gamma_k}$ is a non-trivial and non-negative $\mathcal{H}^{(\alpha)}$-multiplier sequence, then $\{g_{m+k}^*(-1)\}_{k=0}^{\infty}$ is a classical multiplier sequence for any $m\in\mbb{N}_0$.
\end{lem}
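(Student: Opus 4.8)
The plan is to pass to generating functions and apply the P\'olya--Schur correspondence (Theorem \ref{PStrans}), using the extra rigidity that Theorem \ref{HMSchar} provides. First I would record the exponential generating function of the sequence $\{g_n^*(-1)\}_{n=0}^{\infty}$. Writing $g_n^*(-1) = \sum_{k=0}^{n}\binom{n}{k}\gamma_k(-1)^{n-k}$ and forming the Cauchy product of $\varphi(t) = \sum_{k\ge 0}\frac{\gamma_k}{k!}t^k$ with $e^{-t} = \sum_{j\ge 0}\frac{(-1)^j}{j!}t^j$ gives
\[
\sum_{n=0}^{\infty} \frac{g_n^*(-1)}{n!}\,t^n = e^{-t}\varphi(t);
\]
equivalently, $g_n^*(-1)$ is the $n$-th forward difference $\Delta^n\gamma_0$. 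This reduces the problem to analyzing the entire function $e^{-t}\varphi(t)$ and its derivatives.

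The heart of the argument is to show $e^{-t}\varphi(t) \in \LP^+$. By Theorem \ref{HMSchar}, the hypothesis that $\seq{\gamma_k}$ is a non-trivial non-negative $\mathcal{H}^{(\alpha)}$-multiplier sequence forces $\varphi\in\LP^+$ with $\sigma\ge 1$, so that
\[
\varphi(t) = c\,t^{\ell}e^{\sigma t}\prod_{k=1}^{\omega}\left(1+\frac{t}{x_k}\right), \qquad c\ge 0,\ \ell\in\N_0,\ \sigma\ge 1,\ x_k>0.
\]
Multiplying by $e^{-t}$ only replaces the exponential factor $e^{\sigma t}$ by $e^{(\sigma-1)t}$, and since $\sigma-1\ge 0$ the function
\[
e^{-t}\varphi(t) = c\,t^{\ell}e^{(\sigma-1)t}\prod_{k=1}^{\omega}\left(1+\frac{t}{x_k}\right)
\]
is still of the $\LP^+$ form (non-negative exponential type, only non-positive zeros, hence non-negative Taylor coefficients). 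This is precisely where the defining feature of Hermite multiplier sequences enters: for a general classical multiplier sequence one only has $\sigma\ge 0$, and then $e^{-t}\varphi(t)$ may leave $\LP^+$ (for instance $\varphi(t)=(1+t)^2$ yields the non-multiplier sequence $1,1,-1,-1,5,\dots$), so the hypothesis is genuinely used here.

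To incorporate the shift by $m$, I would differentiate $m$ times:
\[
\frac{d^m}{dt^m}\bigl[e^{-t}\varphi(t)\bigr] = \sum_{k=0}^{\infty}\frac{g_{m+k}^*(-1)}{k!}\,t^k.
\]
Since $\LP$ is closed under differentiation and the Taylor coefficients of a derivative of an $\LP^+$ function form the non-negative tail of the original coefficient sequence, $\LP^+$ is closed under differentiation; hence the right-hand side lies in $\LP^+$. As its terms $g_{m+k}^*(-1)$ are therefore non-negative, Theorem \ref{PStrans} shows that $\{g_{m+k}^*(-1)\}_{k=0}^{\infty}$ is a classical multiplier sequence, completing the proof. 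The only steps needing care are the closure of $\LP$ under differentiation (a standard property) and the verification that an $\LP$ function with non-negative Taylor coefficients belongs to $\LP^+$; together these guarantee both the $\LP^+$ membership and the non-negativity required to invoke P\'olya--Schur.
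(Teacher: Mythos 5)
Your argument is correct and follows essentially the same route as the paper: both identify $\sum_{n\ge 0} g_n^*(-1)\,t^n/n!$ with $e^{-t}\varphi(t)$, invoke Theorem \ref{HMSchar} to get $\sigma\ge 1$ so that the factor $e^{(\sigma-1)t}$ keeps the product in $\LP^+$, and then apply Theorem \ref{PStrans}. The only cosmetic difference is in the shift by $m$: the paper cites the known fact (Levin, Ch.~8, Sec.~3) that a shift of a classical multiplier sequence is again one, whereas you reprove that fact by differentiating the generating function $m$ times, which is the standard proof of the same statement.
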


\begin{proof}
Suppose that $\seq{\gamma_k}$ is a non-trivial $\mathcal{H}^{(\alpha)}$-multiplier sequence of non-negative terms. By Theorem \ref{HMSchar}, $\ds \varphi(x) = \sum_{k=0}^{\infty} \frac{\gamma_k}{k!} x^k$ is an entire function which can be represented in the form 
\begin{equation} \label{sigmabiggerone}
c x^m e^{\sigma x} \prod_{k=0}^{\omega} \left(1+ \frac{x}{x_k}\right),
\end{equation}
where $c> 0$, $m$ is a non-negative integer, $x_k>0$, $0\leq \omega \leq \infty$, $\sum 1/x_k <\infty$ and, most importantly, $\sigma \geq 1$. 

From equation 3.6 in \cite{CCsurvey}, for any real number $t$,
$$
e^{xt} \varphi(x) = \sum_{k=0}^{\infty} g_k(1/t) \frac{(xt)^k}{k!} = \sum_{k=0}^{\infty} g_k^*(t) \frac{x^k}{k!}.
$$
Thus 
\begin{eqnarray}\label{gstardelta}
\sum_{k=0}^{\infty} g_k^*(-1) \frac{x^k}{k!} &=& e^{-x} \varphi(x) \\
&=& c x^m e^{(\sigma-1) x} \prod_{k=0}^{\omega} \left(1+ \frac{x}{x_k}\right)\in\mathcal{L-P^+}, \nonumber
 \end{eqnarray}
where the last inclusion is a consequence of $\sigma-1\geq 0$.  For $m=0$, the conclusion now follows from the characterization of classical multiplier sequences (Theorem \ref{PStrans}). Since $\seq{g_{k+m}^*(-1)}$ is a classical multiplier sequence whenever $\seq{g^*_k(-1)}$ is (see \cite[Ch. 8, Sec. 3]{levin}), the general result follows.
\end{proof}
We are now ready to extend the result of Proposition \ref{finiteorder} to infinite order differential operators which are diagonal with respect to a generalized Hermite basis.

\begin{thm}\label{tkreal} Suppose $\alpha >0$ and that $\seq{\gamma_k}$ is a non-trivial and non-negative $\mathcal{H}^{(\alpha)}$-multiplier sequence. Then the polynomials $Q_k(x)$ appearing in (\ref{Tk}) must have only real zeros for all $k \geq 0$.
\end{thm}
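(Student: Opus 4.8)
The plan is to factor the whole problem through the Gaussian operator $\mathcal{E} := e^{-\alpha D^2/2}$. A direct computation from the definition of $\mathcal{H}_n^{(\alpha)}$ (equivalently, from the generating relation $\sum_n \mathcal{H}_n^{(\alpha)}(x)t^n/n! = e^{xt-\alpha t^2/2}$) shows that $\mathcal{E}[x^n] = \mathcal{H}_n^{(\alpha)}(x)$ for every $n$. Substituting $\mathcal{H}_{k-2j}^{(\alpha)}(x) = \mathcal{E}[x^{k-2j}]$ into $(\ref{Tk})$ and using linearity, I would write $Q_k = \mathcal{E}[R_k]$, where
\[
R_k(x) = \sum_{j=0}^{[k/2]} \frac{(-\alpha)^j}{j!(k-2j)!}\, g_{k-j}^*(-1)\, x^{k-2j}.
\]
This reduces the theorem to two assertions: (a) each $R_k$ has only real zeros, and (b) the operator $\mathcal{E}$ preserves reality of zeros.

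For (a), I would exploit that $R_k$ has the parity of $k$ and compare it with the real-rooted polynomial $\mathcal{H}_k^{(2\alpha)}$, whose monomial of degree $k-2j$ has coefficient $\frac{k!(-\alpha)^j}{j!(k-2j)!}$, which is exactly $k!$ times the coefficient of $R_k$ after the factor $g_{k-j}^*(-1)$ is removed. Writing $k=2\ell$ or $k=2\ell+1$ and substituting $u=x^2$ (factoring out $x$ in the odd case), $\mathcal{H}_k^{(2\alpha)}$ becomes a polynomial $P(u)=\sum_m p_m u^m$ with only real, strictly positive roots, so its coefficients $p_m$ alternate in sign; under the same substitution $R_k$ becomes a constant multiple of $\sum_m g_{\ell+m}^*(-1)\,p_m u^m$ in the even case, and the analogous expression with shift $\ell+1$ in the odd case. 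By Lemma $\ref{g*mult}$ the sequence $\{g_{\ell+m}^*(-1)\}_{m=0}^{\infty}$ is a classical multiplier sequence, hence it preserves reality of zeros, and because its terms are nonnegative it leaves the alternating sign pattern of the $p_m$ intact. Since a real-rooted polynomial with weakly alternating coefficients has only nonnegative roots, the transformed polynomial in $u$ has only nonnegative roots, which in turn forces $R_k(x)$ to have only real zeros.

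For (b), I would realize $\mathcal{E}$ as a limit of Hermite--Poulain operators. Since $e^{-\alpha x^2/2} = \lim_{n\to\infty}\left(1-\frac{\alpha x^2}{2n}\right)^n$ locally uniformly and each $\left(1-\frac{\alpha x^2}{2n}\right)^n$ is a real polynomial with only real zeros, the Hermite--Poulain theorem (see \cite{levin}) guarantees that each operator $\left(1-\frac{\alpha D^2}{2n}\right)^n$ sends real-rooted polynomials to real-rooted polynomials. Applying these operators to $R_k$ and letting $n\to\infty$, the images converge to $\mathcal{E}[R_k]=Q_k$, and Hurwitz's theorem shows that the limit has only real zeros (provided $R_k\not\equiv 0$, the degenerate cases being trivial). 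Composing (a) and (b) then gives that $Q_k$ has only real zeros for every $k\geq 0$.

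I expect the crux to be step (a), specifically the bookkeeping that recasts the reweighting of the coefficients of $\mathcal{H}_k^{(2\alpha)}$ as the action of the multiplier sequence $\{g_{\ell+m}^*(-1)\}_{m}$ on the variable $u=x^2$, together with the observation that nonnegativity of the multiplier terms preserves the sign alternation and hence keeps all roots nonnegative. This last point is essential: a classical multiplier sequence guarantees only that the image is real-rooted in $u$, whereas recovering real zeros of $R_k$ in $x$ requires those $u$-zeros to be nonnegative.
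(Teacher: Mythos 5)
Your proposal is correct and takes essentially the same route as the paper's proof: you show the pre-image polynomial $R_k(x)=\sum_j \frac{(-\alpha)^j}{j!(k-2j)!}g_{k-j}^*(-1)x^{k-2j}$ is real-rooted using Lemma \ref{g*mult} together with the observation that nonnegative multipliers preserve the alternating sign pattern and hence the positivity of the roots in $u=x^2$, and then you push forward by the reality-preserving map $x^n\mapsto \mathcal{H}_n^{(\alpha)}(x)$. The only (cosmetic) differences are that the paper reaches your $\mathcal{H}_k^{(2\alpha)}$ comparison by reversing $\mathcal{H}_k^{(\alpha)}$ and applying the multiplier sequence $\left\{2^j\right\}_{j=0}^{\infty}$, and cites \cite{andrzej} for the reality-preservation of $x^n\mapsto\mathcal{H}_n^{(\alpha)}(x)$ where you re-derive it via Hermite--Poulain and Hurwitz.
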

\begin{proof} Since $\deg Q_k(x) \leq k$ for all $k$, the zeros of the first two coefficient polynomials are always real, regardless of the sequence $\seq{\gamma_k}$. Assume for the remainder of the proof that $k \geq 2$. \\
{\bf Case 1: $k$ is even.} Using the expansion (see, e.g., \cite[p. 13]{andrzej})
\[
\mathcal{H}_k^{(\alpha)}(x)=\sum_{j=0}^{\lfloor k/2 \rfloor} \frac{k!}{2^j}\frac{(-\alpha)^j}{j!(k-2j)!}x^{k-2j}
\]
we define
\[
f(x):= x^k \mathcal{H}_k^{(\alpha)} \left(\frac{1}{x} \right)=\sum_{j=0}^{k/2} \frac{k!}{2^j}\frac{(-\alpha)^j}{j!(k-2j)!}x^{2j}.
\]
Note that $f$ is an even function with only real zeros, and hence it can be written as 
\[
f(x)=\prod_{j=1}^{k/2} (x^2-x_j), \qquad (x_j > 0, \ j=1,2,\ldots k/2).
\]
Consequently,
\[
f(\sqrt{x})=\prod_{j=1}^{k/2} (x-x_j)=\sum_{j=0}^{k/2} \frac{k!}{2^j}\frac{(-\alpha)^j}{j!(k-2j)!}x^{j} \in \mathcal{L-P}.
\]
Applying the classical multiplier sequence $\left \{2^j \right \}_{j=0}^{\infty}$ to $f(\sqrt{x})$ and dividing by $k!$ we obtain 
\[
h(x):=\sum_{j=0}^{k/2}\frac{(-\alpha)^j}{j!(k-2j)!}x^{j},
\]
whose zeros are all real. Reversing the coefficients of $h(x)$ and applying the classical multiplier sequence $\left \{g_{k/2+j}^*(-1)\right \}_{j=0}^{\infty}$ (see Lemma \ref{g*mult}) we arrive at
\[
\widetilde{h}(x)=\sum_{j=0}^{k/2} \frac{(-\alpha)^j}{j!(k-2j)!}g^*_{k-j}(-1)x^{k/2-j},
\]
which is a polynomial with only real zeros. Note further, that the zeros of $\widetilde{h}(x)$ must also be {\it positive}, in light of Lemma \ref{g*mult}, since its coefficients are alternating in sign. We conclude that 
\[
\widetilde{h}(x^2)=\sum_{j=0}^{k/2} \frac{(-\alpha)^j}{j!(k-2j)!}g^*_{k-j}(-1)x^{k-2j}
\]
also has only real zeros. Since the linear operator $x^k \to \mathcal{H}^{(\alpha)}_k$, $k \in \N_0$,  is reality preserving (see \cite[Example 30 and Theorem 38]{andrzej}), the polynomial
\[
Q_k(x)=\sum_{j=0}^{\lfloor k/2 \rfloor} \frac{(-\alpha)^j}{j!(k-2j)!}g_{k-j}^*(-1) \mathcal{H}_{k-2j}^{(\alpha)}(x)
\]
has only real zeros. \\
{\bf Case 2: $k$ is odd.} In this case we define $f$ slightly differently by
\[
x \cdot f(x):= x^{k+1} \mathcal{H}_k^{(\alpha)} \left(\frac{1}{x} \right)=x \cdot \sum_{j=0}^{(k-1)/2} \frac{k!}{2^j}\frac{(-\alpha)^j}{j!(k-2j)!}x^{2j}.
\]
The steps now are identical to those in the even case: we apply the classical multiplier sequence $\left \{2^{j+1}\right\}_{j=0}^{\infty}$ to $x \cdot f(\sqrt{x})$, and divide by $k!$ to obtain 
\[
x \cdot h(x):=x \cdot \sum_{j=0}^{(k-1)/2}\frac{(-\alpha)^j}{j!(k-2j)!}x^{j},
\]
whose zeros are all real. Reversing the coefficients of $h(x)$ and applying the classical multiplier sequence $\left \{g_{(k-1)/2+j+1}^*(-1)\right\}_{j=0}^{\infty}$ leads to 
\[
x \cdot \widetilde{h}(x)=x \cdot \sum_{j=0}^{(k-1)/2} \frac{(-\alpha)^j}{j!(k-2j)!}g^*_{k-j}(-1)x^{(k-1)/2-j},
\]
a polynomial with only real positive zeros. Hence
\[
x \cdot \widetilde{h}(x^2)=\sum_{j=0}^{(k-1)/2} \frac{(-\alpha)^j}{j!(k-2j)!}g^*_{k-j}(-1)x^{k-2j},
\]
and subsequently 
\[
Q_k(x)=\sum_{j=0}^{\lfloor k/2 \rfloor} \frac{(-\alpha)^j}{j!(k-2j)!}g_{k-j}^*(-1) \mathcal{H}_{k-2j}^{(\alpha)}(x)
\]
has only real zeros. The proof is complete.
\end{proof}

%\begin{rem} \label{counterex} Two remarks are in order. \\
%(1) The converse of Theorem $\ref{tkreal}$ is false.  Indeed, the polynomial coefficients of the operator
%\[
%T = -1 + x D - \alpha D^2
%\]
%have only real zeros, but this operator corresponds to the sequence $\{k-1\}_{k=0}^{\infty}$, which is not even a classical multiplier sequence, much less a generalized Hermite multiplier sequence. \vskip .1 in 

%\noindent (2) In order for a linear operator corresponding to $\seq{\gamma_k}$ to preserve reality of zeros, it is not sufficient that the sequence $\{g_k^*(-1)\}$ be a multiplier sequence.  The infinite order differential operator corresponding to the sequence $\{2^{-k}\}$, which is not a generalized Hermite multiplier sequence, has coefficient polynomials with non-real zeros, even though the associated sequence $\{g_k^*(-1)\}=\{(-2)^{-k}\}$ is a multiplier sequence.

%\end{rem}

\subsection{The converse of Theorem \ref{tkreal}}
As seen in Theorem \ref{HMSchar}, whether or not a sequence of non-negative numbers $\seq{\gamma_k}$ is a generalized Hermite multiplier sequence depends entirely on whether $\sigma\geq 1$ in Definition \ref{LPdef}. Thus it is natural to expect that $\sigma$ should play a role in determining whether or not the coefficient polynomials $Q_k(x)$ have only real zeros. Conversely, the polynomials $Q_k(x)$ having only real zeros should imply that $\sigma \geq 1$. As it turns out, the latter implication is not always true. Consider for example the $\mathcal{H}^{(\alpha)}$-diagonal linear operator
\[
T=a+ x D - \alpha D^2 \qquad (a \in \R),
\]
which, by equation (\ref{Hdiffeq}), represents the sequence $\seq{k+a}$. Despite the fact that all coefficients of $T$ have only real zeros, the sequence is an $\mathcal{H}^{(\alpha)}$-multiplier sequence if and only if $a \geq 0$. We are thus led to consider only those sequences which are Taylor coefficients of functions in $\LP^+$  in order to establish a converse of Theorem \ref{tkreal}. We continue our investigation with the following lemma.

\begin{lem}\label{turanish} Assume the setup of Theorem \ref{Tkthm} and
suppose $k \geq 2$. If $Q_k(x)$ has only real zeros, then $\ds [g_{k}^{*}(-1)]^2 + 2 g_{k}^{*}(-1) g_{k-1}^{*}(-1)\geq 0$.
\end{lem}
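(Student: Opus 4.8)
The plan is to read off the top few coefficients of $Q_k(x)$ from (\ref{Tk}) and to recognize the asserted inequality as a Newton (Tur\'an-type) inequality forced by real-rootedness. Writing $Q_k(x)=\sum_{i=0}^{k}A_i x^i$, the first key observation is a parity one: since $k-2j\equiv k \pmod{2}$ for every $j$, each $\mathcal{H}_{k-2j}^{(\alpha)}(x)$ appearing in (\ref{Tk}) is a sum of monomials whose exponents all have the parity of $k$. Hence $Q_k(x)$ has no term of degree $k-1$; that is, $A_{k-1}=0$, independently of the sequence.

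Next I would extract $A_k$ and $A_{k-2}$ using the explicit expansion $\mathcal{H}_n^{(\alpha)}(x)=\sum_{j}\frac{n!}{2^j}\frac{(-\alpha)^j}{j!(n-2j)!}x^{n-2j}$ recorded in the proof of Theorem \ref{tkreal}. Only the $j=0$ summand of (\ref{Tk}) reaches degree $k$, so $A_k=g_k^*(-1)/k!$. Degree $k-2$ is reached only by $j=0$ (through the $x^{k-2}$ coefficient $-\alpha k(k-1)/2$ of $\mathcal{H}_k^{(\alpha)}$) and by $j=1$ (through the monic leading term of $\mathcal{H}_{k-2}^{(\alpha)}$); collecting these contributions gives
\[
A_{k-2}=-\frac{\alpha}{2(k-2)!}\bigl[g_k^*(-1)+2g_{k-1}^*(-1)\bigr].
\]

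Finally I would apply Newton's inequality to the degree-$k$ real-rooted polynomial $Q_k$, namely $A_{k-1}^2\ge \frac{2k}{k-1}A_kA_{k-2}$. Because $A_{k-1}=0$, this collapses to $A_kA_{k-2}\le 0$. Substituting the two coefficients,
\[
A_kA_{k-2}=-\frac{\alpha}{2\,k!\,(k-2)!}\Bigl[\bigl(g_k^*(-1)\bigr)^2+2\,g_k^*(-1)g_{k-1}^*(-1)\Bigr],
\]
and since $\alpha>0$ the nonpositivity of the left-hand side is equivalent to the nonnegativity of the bracket, which is exactly the assertion.

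The only points requiring care are the bookkeeping that produces $A_{k-2}$ correctly and the degenerate case $g_k^*(-1)=0$: there $Q_k$ has degree below $k$, so Newton's inequality in the stated form does not apply, but then the claimed expression equals $0$ outright and the inequality holds trivially, so one may assume $\deg Q_k=k$ before invoking Newton. I do not anticipate a serious obstacle; the substantive content is the parity fact that annihilates $A_{k-1}$ together with the standard conversion of real-rootedness into a quadratic inequality on the leading coefficients, and the constant $\tfrac{2k}{k-1}$ is immaterial beyond its positivity (the case $k=2$ recovers the familiar discriminant condition $A_1^2\ge 4A_2A_0$).
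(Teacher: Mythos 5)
Your proposal is correct and follows essentially the same route as the paper's proof: both extract the coefficients of $x^k$ and $x^{k-2}$ from (\ref{Tk}) (arriving at the same values $A_k=g_k^*(-1)/k!$ and $A_{k-2}=-\tfrac{\alpha}{2(k-2)!}\bigl[g_k^*(-1)+2g_{k-1}^*(-1)\bigr]$) and then deduce $A_kA_{k-2}\le 0$ from real-rootedness together with the vanishing of the $x^{k-1}$ coefficient. Your explicit appeal to Newton's inequality and your handling of the degenerate case $g_k^*(-1)=0$ merely make precise what the paper states tersely as ``the coefficients of $x^k$ and $x^{k-2}$ must have opposite signs unless one of them is zero.''
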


\begin{proof}

\begin{eqnarray*}
Q_k(x) &=& \sum_{j=0}^{[k/2]} \frac{(-\alpha)^j}{j!(k-2j)!}g_{k-j}^*(-1) \mathcal{H}_{k-2j}^{(\alpha)}(x) \\
&=& \frac{1}{k!} g_k^*(-1) \mathcal{H}_k^{(\alpha)}(x) + \frac{-\alpha}{(k-2)!} g_{k-1}^*(-1) \mathcal{H}_{k-2}^{(\alpha)}(x) + \text{lower order terms}\\
&=& \frac{1}{k!} g_k^*(-1) \left( x^k - \alpha \frac{ k!}{2 (k-2)!} x^{k-2} + \text{lower order terms}  \right) \\
&+& \frac{-\alpha}{(k-2)!} g_{k-1}^*(-1)\left(x^{k-2}+\text{lower order terms}\right).
\end{eqnarray*}
If $Q_k(x)$ has only real zeros, then the coefficients of $x^k$ and $x^{k-2}$ must have opposite signs unless one of them is zero.  Therefore
$$
\frac{1}{k!} g_k^*(-1) \frac{-\alpha}{(k-2)!}\left[ \frac{1}{2} g_{k}^*(-1) + g_{k-1}^*(-1)   \right] \leq 0
$$
and the result follows.
\end{proof}

\begin{thm} \label{finitelymanyzeros} Suppose that $\alpha >0$ and 
\[
\varphi(x)=\sum_{k=0}^{\infty} \frac{\gamma_k}{k!}x^k= c x^m e^{\sigma x} \prod_{k=1}^{N}\left(1+\frac{x}{x_k} \right) \in \LP^+
\]
for some $N \in \N \cup \{ 0 \}$. If the polynomials $Q_k(x)$ given in equation (\ref{Tk}) have only real zeros for $k \geq 2$, then $\seq{\gamma_k}$ is an $\mathcal{H}^{(\alpha)}$-multiplier sequence.
\end{thm}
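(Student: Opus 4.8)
The plan is to reduce everything to the single inequality $\sigma \geq 1$: once this is known, $\varphi$ retains a genuine exponential factor $e^{\sigma x}$ with $\sigma \geq 1 > 0$, so $\seq{\gamma_k}$ is non-trivial, and since $\varphi \in \LP^+$, Theorem \ref{HMSchar} immediately gives that $\seq{\gamma_k}$ is an $\mathcal{H}^{(\alpha)}$-multiplier sequence. I would establish $\sigma \geq 1$ by contradiction: assuming $0 \le \sigma < 1$, I will produce arbitrarily large $k$ for which the Tur\'an-type inequality of Lemma \ref{turanish} fails, contradicting the standing hypothesis that every $Q_k(x)$ with $k \geq 2$ has only real zeros.

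The first step is to record the generating identity (\ref{gstardelta}), which in our finite-product setting reads
\[
\sum_{k=0}^{\infty} g_k^*(-1)\frac{x^k}{k!} = e^{-x}\varphi(x) = c\,x^m e^{(\sigma-1)x}\prod_{k=1}^{N}\left(1+\frac{x}{x_k}\right) =: \psi(x).
\]
Writing $\tau := \sigma - 1$ and $Q(x) := c\,x^m\prod_{k=1}^N(1+x/x_k)$, a polynomial of degree $d := m+N$ with strictly positive leading coefficient $q_d = c\prod_{k=1}^N x_k^{-1}$, we have $\psi(x) = Q(x)e^{\tau x}$. Setting $a_n := g_n^*(-1)/n!$ for the Taylor coefficients of $\psi$, the inequality of Lemma \ref{turanish}, namely $[n!\,a_n]^2 + 2(n!\,a_n)\big((n-1)!\,a_{n-1}\big) \ge 0$, becomes after dividing by $n!\,(n-1)!$
\[
a_n\big(n\,a_n + 2\,a_{n-1}\big) \ge 0 \qquad (n \ge 2).
\]

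The crux is a short asymptotic analysis of $a_n$. For $n > d$ one has $a_n = \sum_{i=0}^{d} q_i\,\tau^{n-i}/(n-i)!$, and because the term $i=d$ has the slowest factorial decay it dominates, giving $a_n = \dfrac{\tau^{n-d}}{(n-d)!}\big(q_d + o(1)\big)$. Since $q_d > 0$ and (in the contradiction hypothesis) $\tau < 0$, the sign of $a_n$ is $(-1)^{n-d}$, so consecutive coefficients have opposite signs and $r_n := a_n/a_{n-1} < 0$ for all large $n$; moreover $n\,r_n = \dfrac{n\tau}{n-d}(1+o(1)) \to \tau = \sigma - 1$. Dividing the displayed inequality by $a_{n-1}^2 > 0$ yields $r_n\big(n\,r_n + 2\big) \ge 0$. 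But $n\,r_n + 2 \to \sigma + 1 \ge 1 > 0$ (using $\sigma \ge 0$), while $r_n < 0$, so $r_n(n\,r_n + 2) < 0$ for all sufficiently large $n$, contradicting $r_n(n\,r_n+2)\ge 0$. Hence $\sigma \ge 1$, and Theorem \ref{HMSchar} completes the proof.

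I expect the asymptotic step to be the main obstacle: one must justify rigorously that the top-degree term of $Q$ governs $a_n$, and pin down both the alternating sign of $a_n$ and the precise limit $n\,a_n/a_{n-1} \to \sigma - 1$. Everything else — the translation of real-rootedness into the Tur\'an inequality via Lemma \ref{turanish}, and the final appeal to Theorem \ref{HMSchar} — is bookkeeping. A minor point to verify is that $\psi$ has infinitely many nonzero coefficients, so that $a_{n-1} \neq 0$ for large $n$ and the division is legitimate; this holds precisely because $\tau \neq 0$ whenever $\sigma < 1$.
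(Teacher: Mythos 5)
Your proposal is correct and follows essentially the same route as the paper: both pass to $e^{-x}\varphi(x)$, show that the top-degree term of the polynomial factor dominates so that the ratio of consecutive coefficients of $g_k^*(-1)$ (equivalently $n\,a_n/a_{n-1}$ in your normalization) tends to $\sigma-1$, and then derive a contradiction with the Tur\'an-type inequality of Lemma \ref{turanish} when $0\le\sigma<1$. Your explicit remark that the non-vanishing of $a_{n-1}$ for large $n$ rests on $\tau\neq 0$ is a slightly more careful rendering of the paper's claim that at most finitely many $g_k^*(-1)$ vanish, which as stated in the paper would fail for $\sigma=1$ but is only needed in the contradiction branch where $\sigma<1$.
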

\begin{proof} Suppose that the polynomials $Q_k(x)$ given in equation (\ref{Tk}) have only real zeros for $k \geq 2$ and that $\varphi(x)$ is as in the statement of the theorem. Write 
\[
\prod_{k=1}^{N}\left(1+\frac{x}{x_k} \right)=\sum_{k=0}^N a_k x^k.
\]
Calculating $\varphi^{(k)}(0)$ directly gives $\gamma_j=0$ for $j=0,1,\ldots,m-1$ and
\begin{equation}\label{gstars}
\gamma_{k+m}=(k+m)! \sum_{j=0}^{\min\{k,N\}} a_{j}\frac{\sigma^{k-j}}{(k-j)!}, \qquad (k=0,1,2,\ldots).
\end{equation}
Combining equations (\ref{gstardelta}) and (\ref{gstars}) for $k>N$ we obtain
\begin{equation}\label{firstconnection}
g_{k+m}^*(-1)=c (k+m)! \sum_{j=0}^N a_{j} \frac{(\sigma-1)^{k-j}}{(k-j)!}.
\end{equation} 
From equation (\ref{firstconnection}) we see that at most finitely many of the coefficients $g_k^*(-1)$ can vanish.
Consequently, if $k \gg N$, then
\begin{eqnarray*}
\frac{g_{k+1+m}^*(-1)}{g_{k+m}^*(-1)}&=&(k+1+m) \frac{\sum_{j=0}^N a_{j} \frac{(\sigma-1)^{k+1-j}}{(k+1-j)!}}{\sum_{j=0}^N a_{j} \frac{(\sigma-1)^{k-j}}{(k-j)!}} \\
&=&\frac{k+1+m}{k+1-N}(\sigma-1) \frac{a_{N}+\mathcal{O}\left( \frac{1}{(k+2-N)!}\right)}{a_{N}+\mathcal{O}\left( \frac{1}{(k+1-N)!}\right)}.
\end{eqnarray*}
From here we conclude that 
\[
\frac{g_{k+1+m}^*(-1)}{g_{k+m}^*(-1)} \to (\sigma-1) \qquad \textrm{as} \quad k \to \infty.
\]
Assume now that $\seq{\gamma_k}$ is not an $\mathcal{H}^{(\alpha)}$-multiplier sequence. Then $0 \leq \sigma <1$ in the Hadamard factorization of $\varphi$, and hence $-1 \leq \sigma-1<0$. Consequently,
\begin{eqnarray*}
g_{k+1+m}^*(-1)^2+2 g_{k+1+m}^*(-1)g_{k+m}^*(-1)&=& g_{k+1+m}^*(-1)\left[g_{k+1+m}^*(-1)+2g_{k+m}^*(-1) \right]\\
&\to& g_{k+1+m}^*(-1)g_{k+m}^*(-1)\left[ 2+(\sigma-1) \right] <0.
\end{eqnarray*}
Lemma \ref{turanish} now implies that the polynomials $Q_k(x)$ must  have non-real zeros for $k \gg 1$, a contradiction.
\end{proof}
The above proof gives the first insight as to how the magnitude of $\sigma$ may influence the reality of the zeros of $Q_k(x)$ through the quantities $g_k^*(-1)$. We further offer the following examples as illustration of Theorem \ref{finitelymanyzeros} as well as motivation for Theorem \ref{infinitelymanyzeros}.
\begin{ex}\label{simpleexample} Consider the function
\[
\sum_{k=0}^{\infty} \frac{4(k+1)^2-8(k+1)+5}{2^kk!} x^k=e^{\frac{1}{2}x} (1+x)^2 \in \LP^+.
\] 
It has one zero ($x=-1)$ with multiplicity 2, and hence we have
\[
a_{0}=1, \qquad a_{1}=2, \qquad  a_{2}=1.
\]
The first few ratios $\displaystyle{\frac{g_{k+1}^*(-1)}{g_{k}^*(-1)}}$ are given in the table below, with $\displaystyle{\lim_{n \to \infty} \frac{g_{n+1}^*(-1)}{g_n^*(-1)}=-\frac{1}{2}}$. 
\begin{table}[htdp]
\caption{$g_{k+1}^*(-1)/g_{k}^*(-1)$ for Example \ref{simpleexample}}
\begin{center}
\begin{tabular}{c|cccccccc}
$k$ & 1 & 2 & 3 & 4 & 5 & 6 & 7    \\
\hline
\hline
$g_{k+1}^*(-1)/g_{k}^*(-1)$ & 3/2 & 1/6 & -13/2 &-33/26 & -61/66 & -97/122 & -141/194
\end{tabular}
\end{center}
\end{table}%

\end{ex}
\begin{ex} The classical multiplier sequence $\displaystyle{\seq{ \frac{1}{k!}}}$ represents the Taylor coefficients of the Bessel function of the first kind with index zero (see for example\cite[p.108]{rainville}):
\[
J_0(2 \sqrt{x})=\sum_{j=0}^{\infty} \frac{x^k}{k!k!}.
\]
The corresponding sequence $\displaystyle{\seq{g_k^*(-1)}=\seq{\sum_{j=0}^k \binom{k}{j}\frac{1}{j!}(-1)^{k-j}}}$ starts numerically as
\[
1,0,-\frac{1}{2}, \frac{2}{3}, -\frac{5}{8}, \frac{7}{15}, -\frac{37}{144}, \frac{17}{420}, \ldots
\]
Computing the values of $(g_{k}^*(-1))^2+2g_{k}^*(-1) g_{k-1}^*(-1)$ for $k=1,2,3,4,5$ we get 
\[
0,1,\boxed{-\frac{2}{9},-\frac{85}{192}, -\frac{329}{900}}.
\]
We conclude that $Q_3(x)$ (as well as $Q_4(x)$ and $Q_5(x)$) has non-real zeros. Indeed,
\begin{eqnarray*}
Q_3(x)&=&\frac{g_3^*(-1)}{3!}\mathcal{H}_3^{(\alpha)}(x)-\alpha g_2^*(-1)\mathcal{H}_1^{(\alpha)}\\
&=&\frac{(x^2+6 \alpha)x}{18}
\end{eqnarray*}
from which the conclusion follows easily, since $\alpha >0$.
\end{ex}
\begin{ex}
 Finally, we consider the sequence $\displaystyle{\seq{\frac{r^k}{k!}}}$. A calculation shows that in this case
 \[
 Q_2(x)=\frac{1}{2}\left(1-2 r+\frac{r^2}{2} \right)x^2-\frac{\alpha}{2}\left(\frac{r^2}{3}-1 \right),
 \]
 which has non-real zeros if $\displaystyle{r \in \left[0, \frac{1}{\sqrt{3}}\right) \cup \left(2-\sqrt{2},1\right]}$. If $\displaystyle{ \frac{1}{\sqrt{3}} \leq r \leq 2-\sqrt{2}}$, then $Q_4(x)$ has non-real zeros.  
\end{ex}
Before we can state a converse for Theorem \ref{tkreal} for real entire functions with infinitely many zeros, we need two preliminary results.
\begin{defn} \label{kp} Given a sequence of real numbers $\seq{\gamma_k}$ and $p \in \N \cup \{0 \}$ we define
\[
g_{k,p}^*(-1):=\sum_{n=0}^k \binom{k}{n} \gamma_{n+p}(-1)^{k-n},
\]
and
\[
Q_{k,p}(x):= \sum_{j=0}^{[k/2]} \frac{(-\alpha)^j}{j!(k-2j)!}g_{k-j,p}^*(-1) \mathcal{H}_{k-2j}^{(\alpha)}(x).
\]
Note that setting $p=0$ returns the definitions of $g_k^*(-1)$ and $Q_k(x)$ given in equations (\ref{gstar}) and (\ref{Tk}) respectively.
\end{defn}
\begin{lem}\label{convergentgamma} If $\seq{\gamma_k}$ is convergent, then for any fixed $k \geq 2$, $\lim_{p \to \infty} |g_{k,p}^*(-1)|=0$. 
\end{lem}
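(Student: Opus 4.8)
The plan is to recognize that $g_{k,p}^*(-1)$, as given in Definition \ref{kp}, is a finite alternating sum of exactly $k+1$ terms (a shifted $k$-th finite difference of the sequence $\seq{\gamma_k}$), and that the number of terms is independent of $p$. Consequently, convergence of $\seq{\gamma_k}$ alone lets me pass to the limit term by term. First I would fix $k \geq 2$ and write $\gamma_n \to L$ as $n \to \infty$. For each fixed $n \in \{0,1,\dots,k\}$ the shifted sequence satisfies $\gamma_{n+p} \to L$ as $p \to \infty$, so since $g_{k,p}^*(-1) = \sum_{n=0}^k \binom{k}{n}(-1)^{k-n}\gamma_{n+p}$ is a fixed finite linear combination of these quantities, I would take the limit inside the sum.

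This yields
$$
\lim_{p\to\infty} g_{k,p}^*(-1) = L \sum_{n=0}^k \binom{k}{n}(-1)^{k-n} = L\,(1-1)^k = 0,
$$
where the vanishing of the binomial sum is the binomial theorem evaluated at $1 + (-1)$, valid for every $k \geq 1$ (in particular for $k \geq 2$). To make the termwise passage completely rigorous I would instead write $\gamma_{n+p} = L + \eps_{n+p}$ with $\eps_m \to 0$, absorb the constant part using $(1-1)^k = 0$, and estimate
$$
\left| g_{k,p}^*(-1) \right| = \left| \sum_{n=0}^k \binom{k}{n}(-1)^{k-n}\eps_{n+p} \right| \leq 2^k \max_{0 \leq n \leq k} |\eps_{n+p}|,
$$
which tends to $0$ as $p \to \infty$ because each of the finitely many shifted tails $\eps_{n+p}$ does. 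Taking absolute values then gives $\lim_{p\to\infty} |g_{k,p}^*(-1)| = 0$, as claimed.

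I do not expect any serious obstacle: the entire argument is elementary once one observes that the defining sum has fixed finite length, so no uniform control on the $\gamma_k$ beyond mere convergence is needed. The only point requiring (minor) care is the cancellation of the limiting value $L$, which is guaranteed precisely because $\sum_{n=0}^k \binom{k}{n}(-1)^{k-n} = 0$ for $k \geq 1$; this identity is where the hypothesis $k \geq 2$ (indeed any $k \geq 1$) is used, and it is exactly the feature that forces the difference $g_{k,p}^*(-1)$ to die off as the window slides out to infinity.
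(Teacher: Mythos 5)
Your proof is correct and follows essentially the same route as the paper's: both subtract the limiting value from each term, use the identity $\sum_{n=0}^k \binom{k}{n}(-1)^{k-n} = (1-1)^k = 0$ to cancel it, and then bound the remaining finite sum of $k+1$ vanishing tails by a constant times their maximum. No gaps.
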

\begin{proof} Fix $k \geq 2$ and that suppose  $\gamma_j \to \gamma$. Note that
\[
0=\gamma (1-1)^k=\sum_{n=0}^k \binom{n}{k} \gamma (-1)^{n-k}.
\]
Let $\ve>0$, and select $P \in \N$, such that $p \geq P$ implies $|\gamma_p-\gamma| < \ve/2^k$. Then for $p \geq P$ we have
\begin{eqnarray*}
|g_{k,p}^*(-1)-0|&=& \left|\sum_{n=0}^k \binom{k}{n} (\gamma_{n+p}-\gamma) (-1)^{k-n} \right|\\
&\leq& \sum_{n=0}^k \binom{k}{n}|\gamma_{n+p}-\gamma| \\
&<& \frac{\ve}{2^k}\sum_{n=0}^k \binom{k}{n}\\
&=&\ve,
\end{eqnarray*}
from which the conclusion follows.
\end{proof}
\begin{lem}\label{shifty} For all $k \geq 2$ and $p \geq 0$ the following equality holds: 
\[
g_{k,p}^*(-1)+g_{k+1,p}^*(-1)=g_{k,p+1}^*(-1).
\]
\end{lem}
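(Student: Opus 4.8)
The plan is to prove the identity by a direct expansion of the two terms on the left using Definition \ref{kp}, followed by an elementary simplification via Pascal's rule. First I would write
\[
g_{k,p}^*(-1)+g_{k+1,p}^*(-1)=\sum_{n=0}^{k}\binom{k}{n}\gamma_{n+p}(-1)^{k-n}+\sum_{n=0}^{k+1}\binom{k+1}{n}\gamma_{n+p}(-1)^{k+1-n},
\]
and merge the two sums into a single sum over $n=0,1,\dots,k+1$ (adopting the convention $\binom{k}{k+1}=0$ for the missing term of the first sum), so that the coefficient of $\gamma_{n+p}$ becomes $\binom{k}{n}(-1)^{k-n}+\binom{k+1}{n}(-1)^{k+1-n}$.

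The key step is then to simplify this coefficient. Factoring out $(-1)^{k-n}$ and using $(-1)^{k+1-n}=-(-1)^{k-n}$ gives $(-1)^{k-n}\bigl[\binom{k}{n}-\binom{k+1}{n}\bigr]$, whereupon Pascal's rule $\binom{k+1}{n}=\binom{k}{n}+\binom{k}{n-1}$ collapses the bracket to $-\binom{k}{n-1}$. Hence the coefficient of $\gamma_{n+p}$ equals $(-1)^{k+1-n}\binom{k}{n-1}$.

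Finally I would reindex. The $n=0$ term vanishes since $\binom{k}{-1}=0$, so substituting $m=n-1$ into the remaining range $n=1,\dots,k+1$ yields
\[
\sum_{m=0}^{k}(-1)^{k-m}\binom{k}{m}\gamma_{(m+1)+p},
\]
which is precisely $g_{k,p+1}^*(-1)$ by Definition \ref{kp}.

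I do not expect a genuine obstacle here: the statement is a purely combinatorial identity, and no analytic input is needed. The only point requiring care is the bookkeeping of the summation ranges — merging the two sums of unequal length via the convention $\binom{k}{k+1}=0$, and then discarding the vanishing $n=0$ term so that the reindexed sum lands exactly on the range $0\le m\le k$ that defines $g_{k,p+1}^*(-1)$. The hypotheses $k\ge 2$ and $p\ge 0$ serve only to guarantee that every subscript $n+p$ is a legitimate index of the sequence $\{\gamma_k\}$, so they play no role beyond making the expressions well defined.
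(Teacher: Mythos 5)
Your proof is correct and follows essentially the same route as the paper's: a direct expansion, an application of Pascal's rule to the combined binomial coefficients, and a reindexing. The only cosmetic difference is that you merge the two sums via the convention $\binom{k}{k+1}=0$ whereas the paper peels off the top term $\gamma_{k+1+p}$ and reabsorbs it at the end; the substance is identical.
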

\begin{proof}We compute directly
\begin{eqnarray*}
g_{k,p}^*(-1)+g_{k+1,p}^*(-1)&=&\sum_{n=0}^k \binom{k}{n} \gamma_n+p (-1)^{k-n}+\sum_{n=0}^{k+1} \binom{k+1}{n} \gamma_n+p (-1)^{k+1-n}\\
&=&\gamma_{k+1+p}+\sum_{n=0}^k \left( \binom{k+1}{n}-\binom{k}{n} \right)\gamma_{n+p} (-1)^{k+1-n}\\
&=&\gamma_{k+1+p}+\sum_{n=1}^k \binom{k}{n-1}\gamma_{n+p} (-1)^{k+1-n}\\
&=&\gamma_{k+1+p}+\sum_{n=0}^{k-1} \binom{k}{n}\gamma_{n+p+1} (-1)^{k-n}\\
&=&\sum_{n=0}^{k} \binom{k}{n}\gamma_{n+p+1} (-1)^{k-n}\\
&=&g_{k,p+1}^*(-1).
\end{eqnarray*}
\end{proof}
We are now in position to state and prove a converse of Theorem \ref{tkreal} in the case of a real entire function with infinitely many zeros.
\begin{thm} \label{infinitelymanyzeros} Suppose that
\[
\varphi(x)=\sum_{k=0}^{\infty} \frac{\gamma_k}{k!}x^k=cx^me^{\sigma x} \prod_{k=1}^{\infty} \left(1+\frac{x}{x_k} \right) \in \LP^+,
\]
with $c \in \R$, $m \geq 0$, $x_k >0$ and $\sum \frac{1}{x_k} <+\infty$. If $Q_{k,p}(x)$, as in definition \ref{kp}, has only real zeros for all $k \geq 2$ and all $p \geq 0$, then $\seq{\gamma_k}$ is a Hermite multiplier sequence.   
\end{thm}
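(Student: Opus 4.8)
The plan is to argue by contradiction on the exponential type $\sigma$. Since $\varphi \in \LP^+$ we have $\gamma_k \geq 0$, so by Theorem \ref{HMSchar} the sequence $\seq{\gamma_k}$ fails to be a Hermite multiplier sequence precisely when $0 \leq \sigma < 1$; I therefore assume $\sigma < 1$ and aim to contradict the hypothesis that \emph{every} $Q_{k,p}(x)$ is real-rooted. Writing $b_{k,p} := g_{k,p}^*(-1)$ for brevity, the first step is to record the $Q_{k,p}$-analogue of Lemma \ref{turanish}: because $Q_{k,p}(x)$ has exactly the same shape as $Q_k(x)$ with $g_{k-j}^*(-1)$ replaced by $b_{k-j,p}$, the identical comparison of the coefficients of $x^k$ and $x^{k-2}$ shows that real-rootedness of $Q_{k,p}$ forces $b_{k,p}^2 + 2 b_{k,p} b_{k-1,p} \geq 0$ for all $k \geq 2$ and $p \geq 0$.

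The key manipulation is then to feed Lemma \ref{shifty} into this inequality. Rewriting the identity of Lemma \ref{shifty} as $b_{k,p} = b_{k-1,p+1} - b_{k-1,p}$ (valid for $k \geq 3$, so that the lemma is applied within its stated range) and substituting into the Tur\'an-type expression yields
\[
0 \leq b_{k,p}\bigl(b_{k,p} + 2 b_{k-1,p}\bigr) = \bigl(b_{k-1,p+1} - b_{k-1,p}\bigr)\bigl(b_{k-1,p+1} + b_{k-1,p}\bigr) = b_{k-1,p+1}^{2} - b_{k-1,p}^{2}.
\]
Hence, for every fixed $j \geq 2$, the nonnegative sequence $\{|b_{j,p}|\}_{p \geq 0}$ is nondecreasing in $p$. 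This monotonicity is the heart of the argument, and the index bookkeeping lines up exactly with the range $j \geq 2$ required by Lemma \ref{convergentgamma}.

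To contradict this monotonicity I invoke Lemma \ref{convergentgamma}, which requires knowing that $\seq{\gamma_k}$ converges. Here the assumption $\sigma < 1$ enters: the canonical product $\prod(1+x/x_k)$ has exponential type $0$ (genus $0$ with $\sum 1/x_k < \infty$), so $\varphi$ has exponential type $\sigma$, whence $\limsup_k |\gamma_k|^{1/k} = \sigma < 1$ and therefore $\gamma_k \to 0$. Lemma \ref{convergentgamma} then gives $\lim_{p \to \infty} |b_{j,p}| = 0$ for every fixed $j \geq 2$. A nonnegative nondecreasing sequence tending to $0$ is identically $0$, so $b_{j,p} = 0$ for all $j \geq 2$ and $p \geq 0$; in particular $g_j^*(-1) = b_{j,0} = 0$ for all $j \geq 2$. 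By equation (\ref{gstardelta}) this means $e^{-x}\varphi(x) = \sum_k \frac{g_k^*(-1)}{k!} x^k = g_0^*(-1) + g_1^*(-1) x$ is a polynomial of degree at most $1$, which is impossible since $e^{-x}\varphi(x)$ inherits the infinitely many zeros $-x_k$ of $\varphi$. This contradiction forces $\sigma \geq 1$, and Theorem \ref{HMSchar} then identifies $\seq{\gamma_k}$ as a Hermite multiplier sequence.

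I expect the main obstacle to be the implication $\sigma < 1 \Rightarrow \seq{\gamma_k}$ \emph{convergent}, i.e.\ importing the standard order--type--coefficient estimate in a form that is airtight even when $\varphi$ has order strictly less than $1$ or the canonical product contributes nontrivially; everything else is an assembly of the already-established Lemmas \ref{turanish}, \ref{shifty}, and \ref{convergentgamma} together with the Tur\'an/shift combination above. A secondary point to handle with care is the degenerate bookkeeping (the subcase $g_1^*(-1) = 0$, small values of $m$, and confirming that the sequence under consideration is nontrivial), but none of these affects the final contradiction.
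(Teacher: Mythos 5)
Your proof is correct and follows essentially the same route as the paper's: the Tur\'an-type inequality of Lemma \ref{turanish} applied to $Q_{k,p}$, combined with the shift identity of Lemma \ref{shifty}, yields $|g_{k,p+1}^*(-1)| \geq |g_{k,p}^*(-1)|$, and this monotonicity together with Lemma \ref{convergentgamma} forces $g_k^*(-1)=0$ for all $k\geq 2$, contradicting the fact that $e^{-x}\varphi(x)$ has infinitely many zeros. The one point where you diverge is the justification that $\seq{\gamma_k}$ converges: the paper cites Craven--Csordas (Theorem 4.8 of \cite{CCgausslucas}) to get that the sequence is bounded and eventually monotone, whereas you deduce $\gamma_k\to 0$ directly from the coefficient--type formula $\limsup_k \gamma_k^{1/k}=\sigma<1$ (the canonical product having exponential type zero); this substitute is valid, self-contained, and in fact gives the stronger conclusion that the limit is zero.
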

\begin{proof} In search for a contradiction, assume that $Q_{k,p}(x) \in \LP$ for all $k \geq 2$ and $p \geq 0$, but that the sequence $\seq{\gamma_k}$ of non-negative terms is not an Hermite multiplier sequence. Then by Theorem 4.8 in \cite[p. 427]{CCgausslucas}, the sequence $\seq{\gamma_k}$ is bounded and eventually monotone, hence convergent. By Lemma \ref{convergentgamma} we conclude that for any fixed $k\geq 2$, $\displaystyle{\lim_{p \to \infty} |g_{k,p}^*(-1)|=0}$. Since $Q_{k,p}(x) \in \LP$ for all $k \geq 2$ and $p \geq 0$, Lemma \ref{turanish} gives
\[
(\dag) \quad (2 g_{k,p}^*(-1) +g_{k+1,p}^*(-1))g_{k+1,p}^*(-1) \geq 0 \qquad k \geq 1, p \geq 0.
\]
Using Lemma \ref{shifty} we may rewrite $(\dag)$ as
\[
(g_{k,p+1}^*(-1))^2-(g_{k,p}^*(-1))^2 \geq 0 \qquad k \geq 2, p \geq 0,
\]
or equivalently, $\displaystyle{|g_{k,p+1}^*(-1)| \geq |g_{k,p}^*(-1)|}$. Fixing $k$ and letting $p \to \infty$ produces a monotone increasing sequence of non-negative numbers whose limit is zero. We conclude that $g_k^*(-1)=0$ for all $k \geq 2$. On the other hand, $e^{-x}\varphi(x)$ has infinitely many zeros, and its Taylor coefficients are given by the sequence $\seq{g_{k}^*(-1)}$. We thus see that infinitely many of the $g_k^*(-1)$ must be non-zero, and we have reached a contradiction.
\end{proof}

\section{A note on Laguerre multiplier sequences and their associated operators}\label{laguerresection}
In \cite{bo}, the authors demonstrate (Theorem 1.1) that the linear operator corresponding to any Laguerre multiplier sequence is a finite order differential operator. It follows from Proposition \ref{finiteorder}  that the coefficient polynomials of any operator associated to a Laguerre multiplier sequence have to have only real zeros. In the spirit of the current paper, the following question arises.
\begin{ques} If $\seq{\gamma_k}$ is a classical multiplier sequence, $T[L_n^{(\alpha)}(x)]=\gamma_n L_n^{(\alpha)}(x)$ for all $n$, and $\displaystyle{
T=\sum_{k=0}^n Q_k(x)D^k}$ where $Q_k(x) \in \mathcal{L-P}$ for $k=0,1,\ldots,n$, must it follow that $\seq{\gamma_k}$ is a $L^{(\alpha)}$-multiplier sequence?
\end{ques}
Perhaps somewhat surprisingly, the answer to this question is no, substantiated by the following simple considerations.  The Laguerre diagonal operator corresponding to the sequence $\seq{k+a}$ has the differential operator representation
\[
T=a+(x-\alpha-1)D-xD^2.
\]
It is apparent that all coefficient polynomials of $T$ have only real zeros regardless of the value of $a$ or $\alpha$. 
While $\seq{k+a}$ is a classical multiplier sequence for every $a \geq 0$, it is an $L^{(\alpha)}$-multiplier sequence if and only if $0\leq a \leq \alpha+1$ (see \cite{tomandrzej}).
Thus, even though every generalized Laguerre multiplier sequence is an $\mathcal{H}^{(\alpha)}$-multiplier sequence, Theorem \ref{finitelymanyzeros} has no counterpart in the generalized Laguerre setting. 

\section{Open problems}\label{open}
In light of the penultimate section, our main results do not necessarily have counterparts for bases other than the generalized Hermite bases. We thus pose the following problem:
\begin{prob} Characterize all bases (or simple sets) $\mathcal{B}=\seq{b_k(x)}$ with the following property: if $T[b_k(x)]=\gamma_kb_k(x)$, and $\displaystyle{T=\sum Q_k(x)D^k}$, then $\seq{\gamma_k}$ is a $\mathcal{B}$-multiplier sequence if and only if the polynomials $Q_k(x)$ have only real zeros.
\end{prob}
The standard basis and the generalized Laguerre bases do not have this property.

\bigskip

We believe that the conclusion of Theorem \ref{infinitelymanyzeros} holds even in the case when one only assumes that $Q_k(x) \in \LP$ for $k \geq 2$. No methods known to us at this time yielded a proof of this fact, hence we pose
\begin{prob} Prove that if $T=\sum Q_k(x)D^k$ is a Hermite diagonal operator associated to a classical multiplier sequence, and $Q_k(x)\in \LP$ for all $k$, then $T$ is reality preserving. 
\end{prob}
The techniques used in the proof of Theorem \ref{finitelymanyzeros} should be considered for a possible extension to functions in $\mathcal{L-P}^+$ with infinitely many zeros. The presence of infinitely many zeros requires a subtle and careful analysis, for in this case the limit $\displaystyle{\lim_{k \to \infty} g^*_{k+1+m}(-1)/g_{k+m}^*(-1)}$ need not exist. The graph below shows the first two hundred values of the above quotient for the function $\displaystyle{\varphi(x)=e^{x/2} \cosh(\sqrt{2x})}$. 

\begin{figure}[htbp]
\begin{center}
\includegraphics[width=3 in]{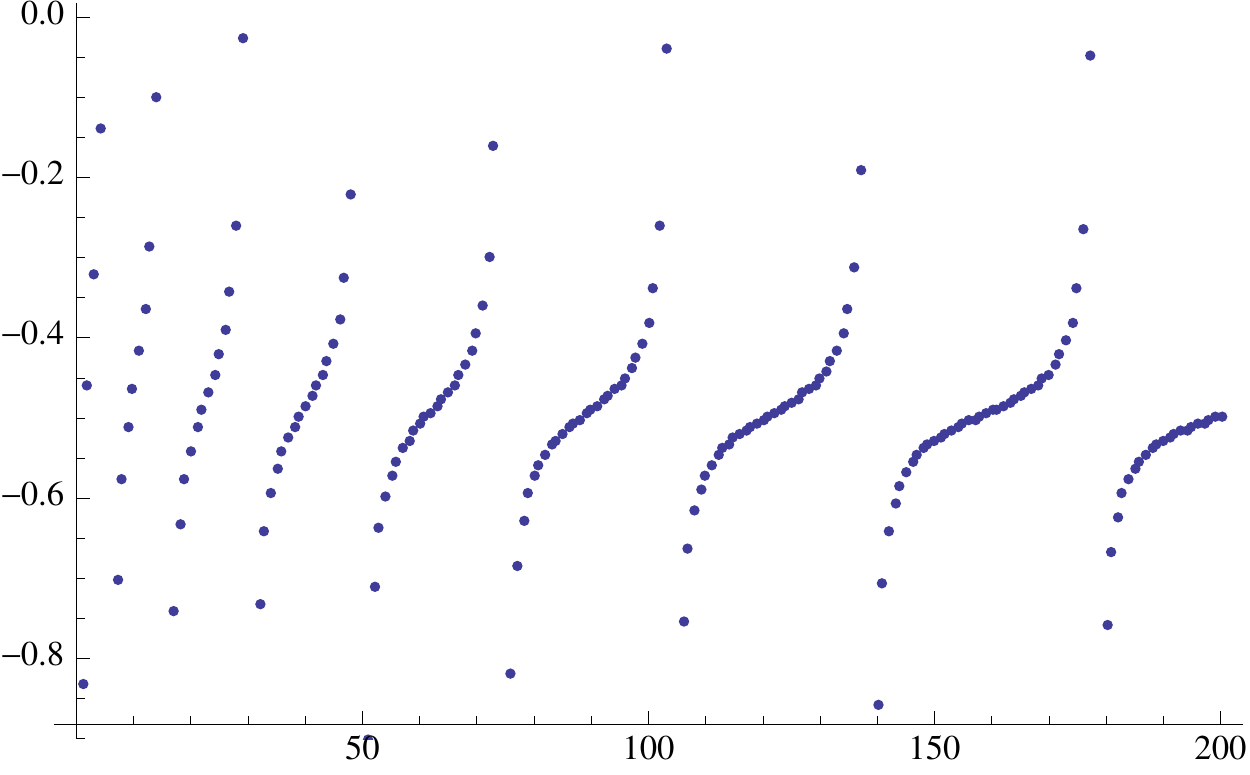}
\caption{The quotient $g_{k+1}^*(-1)/g_k^*(-1)$ for the function $e^{x/2} \cosh (\sqrt{2x})$}
\label{default}
\end{center}
\end{figure}
Looking at a histogram of these values we are led to believe that probabilistic methods may establish that most (and in fact infinitely many) quotients are near $\sigma-1$, and hence there will be a $Q_k(x)$ with non-real zeros. 

\begin{figure}[htbp]
\begin{center}
\includegraphics[width=3 in]{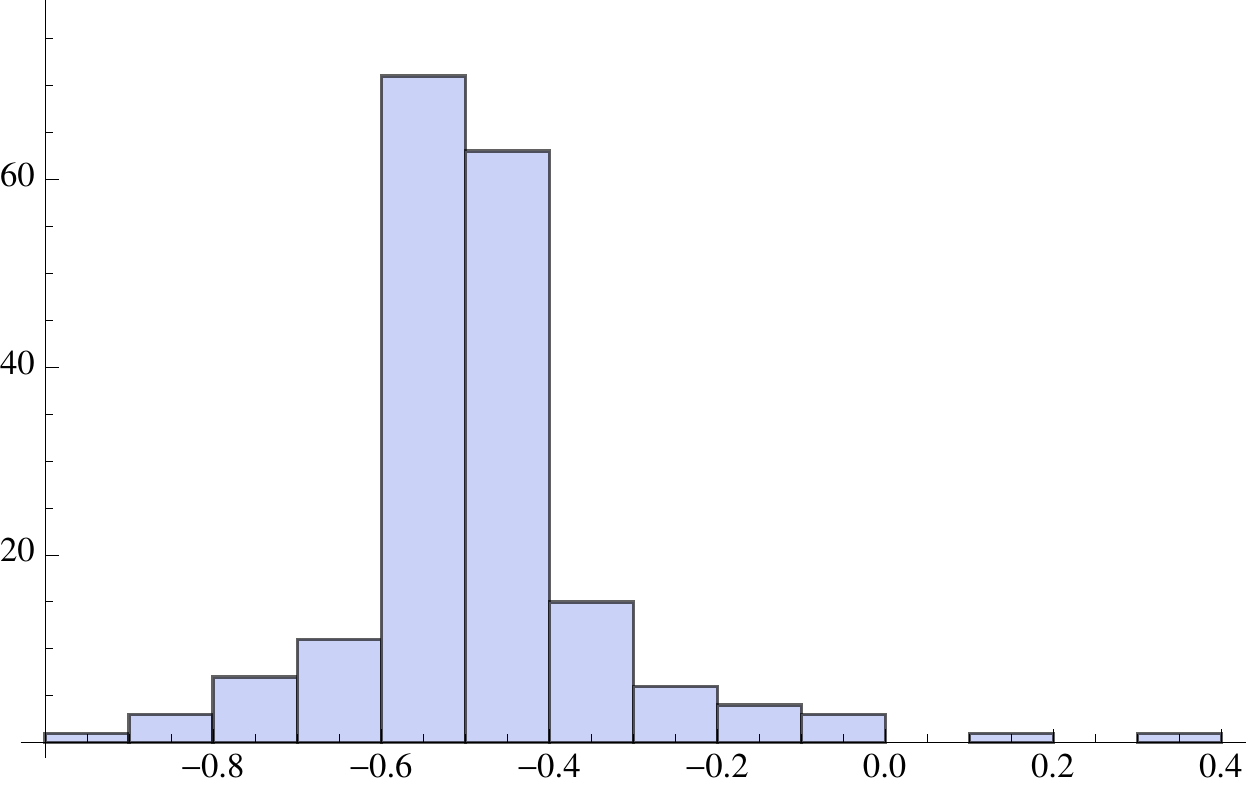}
\caption{A histogram of the first 200 values of the quotient $g_{k+1}^*(-1)/g_k^*(-1)$ for the function $e^{x/2} \cosh (\sqrt{2x})$}
\label{default2}
\end{center}
\end{figure}

In any case, we would like to find the answer to the following.
\begin{prob} Suppose that
\[
\varphi(x)=c x^m e^{\sigma x} \prod_{k=1}^{\infty} \left(1+\frac{x}{x_k} \right) \in \mathcal{L-P}^+
\]
with $0 \leq \sigma < 1$. Is it true that $\sigma-1$ is a subsequential limit of the sequence $\seq{g_{k+1}^*(-1)/g_k^*(-1)}$? More generally, what can we say about the connection between the type of a real entire function of order 1, and the sequence of quotients of its consecutive Taylor coefficients?
\end{prob}
\bigskip

{\bf Acknowledgement:} We would like to thank George Csordas for many stimulating discussions, for some illuminating examples, and for his enduring encouragement during the completion of this work. We also express our gratitude to the participants of the University of Hawai\textquoteleft i 2013 Fall graduate complex analysis seminar for bringing the expression in equation (\ref{bates}) to our attention.
%---------------------------------------

\end{document}